\theoremstyle{plain}
\newtheorem{thm}{Theorem}
\newtheorem{lem}{Lemma}[section]
\newtheorem{rem}[lem]{Remark}
\newtheorem{prop}[lem]{Proposition}
\newcommand{\eps}{\varepsilon}
\newcommand{\E}{\mathbb{E}}
\newcommand{\1}{\mathbbm{1}} 
\newcommand{\N}{\mathbb{N}}
\newcommand{\R}{\mathbb{R}}
\newcommand{\V}{\text{Var}}
\newcommand{\EE}[1]{\mathbb{E} \left[ #1 \right]}
\newcommand{\VV}[1]{\V \left( #1 \right)}
\newcommand{\pp}[1]{\mathbb{P} \left( #1 \right)}
\newcommand{\e}[1]{\mathbb{E}\left[#1\right]}
\begin{document}

\title{Haldane's asymptotics for supercritical branching processes in an iid random environment}

\author{Florin Boenkost\thanks{Institut f\"ur Mathematik, Universität Wien, Wien, Austria, florin.boenkost@univie.ac.at} , Götz  Kersting\thanks{Institut f\"ur Mathematik, Goethe Universit\"at, Frankfurt am Main, Germany, kersting@math.uni-frankfurt.de}}
\maketitle





\begin{abstract}
	Branching processes in a random environment are natural generalisations of Galton-Watson processes. In this paper we analyse the asymptotic decay of the survival probability for a sequence of slightly supercritical branching processes in an iid random environment, where the offspring expectation converges from above to $1$. We prove that Haldane's asymptotics, known from classical Galton-Watson processes, turns up again in the random environment case, provided that one stays away from the critical/subcritical regime. A central building block is a connection to and a limit theorem for perpetuities with asymptotically vanishing interest rates. \\\\
	
	Keywords. Branching process, perpetuity, random environment, supercriticality, survival probability
\end{abstract}

\section{Introduction and main result} \label{Sec Introduction}
In the early twentieth century Fisher \cite{Fisher1923}, Haldane \cite{Haldane1927} and Wright \cite{Wright1931} studied the survival probability of a beneficial mutant gene in large populations. They argued that, as long as the mutant is sufficiently rare and the selective advantage is small, the number of mutants should evolve like a slightly supercritical Galton-Watson process (GWP). As Haldane concluded, the probability $\pi$ of ultimate survival should obey the asymptotics
\begin{align}
	\pi \approx \frac{2\eps}{\sigma^2} \label{Survival prob asymp}
\end{align}
for small $\eps>0$, where $1+\eps$ is the offspring expectation and $\sigma^2$ the offspring variance. This approximation gained a lot of attention in the literature, see \cite{Patwa2008} for an overview. For GWPs this asymptotics was considered among others by Kolmogorov \cite{Kolmogorov1938}, Eshel \cite{Eshel1981}, Athreya \cite{Athreya1992} and Hoppe \cite{Hoppe1992}.

In this paper we consider the asymptotic survival probability of a slightly supercritical branching process in a random environment. As it turns out Haldane's asymptotics 
remains valid for
such processes in case of an iid random environment, as long as one keeps away from the domain of subcritical behaviour. This might come as a surprise, since the accompanying Kolmogorov asymptotics for the survival probability of critical GWPs fails in the case of an iid random environment \cite{Geiger2000}. Close to subcriticality one observes a smooth adaption of Haldane's formula.

Let us recall the notion of a branching process in random environment (BPRE). Denote by $\mathcal{P}(\N_0)$ the space of all probability measures on $\N_0=\{0,1,2,...\}$. 
Endow  $\mathcal{P}(\N_0)$ with the total variation metric and the induced Borel-$\sigma$-algebra. This allows to consider random probability measures on $\mathcal{P}(\N_0)$, namely random variables $P$ with values in $\mathcal{P}(\N_0)$. Relevant quantities related to a random measure $P$ are  its (random) mean and (random) second factorial moment,
\begin{align}
	M=\sum_{z=1}^{\infty} z P[z], \qquad M^{(2)}=\sum_{z=2}^{\infty} z(z-1) P[z],
\end{align} 
where $P[z]$ denotes the (random) weight of $P$ at $z\in \mathbb N_0$. 
Next,  a random environment $\mathsf V=(P_1,P_2,...)$ is a sequence of random probability measures. 
Given such an environment  we call  $\mathsf Z=(Z_k,k\geq 0)$ a branching process in the  random environment $\mathsf V$, if it has the representation
\begin{align}
	Z_k= \sum_{i=1}^{Z_{k-1}} U_{i,k}, \qquad Z_0=1,
\end{align}
where conditionally on $\mathsf V$ the family $(U_{i,k}, i \geq 1, k \geq 1)$ is independent and for each $k\in \N$ the sequence $(U_{i,k}, i \geq 1)$ is identically distributed with distribution $P_k$. 
Here, we consider BPREs in an iid random environment, in this case the environment $\mathsf V=(P_1,P_2,...)$ consists of independent copies of a generic random measure $\mathsf P$. 
Then, the random variables $U_{i,k}$ are (unconditionally) identically distributed, thus copies of a generic random variable $\mathsf U$. For an account on BPREs we refer to \cite{Kersting2017Book} and the literature cited therein.

A BPRE in an iid environment is called subcritical, critical or supercritical, if $\mathbb E[\log \mathsf M] $ is less than, equal to or bigger than 0 (provided the existence of the expectation). If $\mathbb E[\log \mathsf M] \le 0$, then the probability of ultimate survival
\begin{align*}
	\pi : = \lim_{k\to \infty} \mathbb P(Z_k>0)
\end{align*}
vanishes \cite[Theorem 2.1]{Kersting2017Book}. In particular $\mathbb E[\mathsf M] \le 1$ implies a.s. ultimate extinction, as follows from Jensen's inequality. Thus, we may observe a  positive probability $\pi$ of survival only in the case of $\mathbb E[\mathsf M]>1$.

In general, BPREs and GWPs differ a lot in their properties. However, this concerns mainly the subcritical and critical regime. In the supercritical range the random environment is less dominant, and both classes of processes share quite a few properties. Indeed, Tanny \cite{Tanny1988} derived the Kesten-Stigum theorem for GWPs in the random environment setup, see also \cite{Hambly1992}. This correspondence may be observed also for finer asymptotics of supercritical BPREs, as derived in \cite{Bansaye2014,Grama2017,Huang2014}. Thus, one may wonder whether Haldane's asymptotics for slightly supercritical GWPs, also transfers to BPREs. As we shall see, this is for iid environments largely, but not completely true.

Thus, let us consider a sequence $(\mathsf Z_{N},N\ge 1)$ of BPREs. All linked quantities are assigned to the index $N$, like the survival probabilities $\pi_N$, the random means $\mathsf M_N$, the generic offspring number $\mathsf U_N$ and so forth (we use sans-serif letters for random terms, which are designated to  carry the index $N$, like generic variables). In particular,  let the numbers $\eps_N$ and $\nu_N$ be given by the equations
\begin{align}
	\mathbb E[\mathsf M_N]=1+ \eps_N, \quad  \text{Var}(\mathsf M_N)= \nu_N.
\end{align}
Following Haldane, we are concerned with the situation that the  $\eps_N$ form a positive null sequence and that the variance of $\mathsf U_N$ stabilizes as $N \to \infty$, i.e.
\begin{align}
	\text{Var}(\mathsf U_N)= \sigma^2 +o(1) \quad \text{ with } \sigma^2>0. \label{assumption xi}
\end{align}
Using $\varepsilon_N =o(1)$ this assumption may be equally expressed as
\begin{align}
	\mathbb E[\mathsf M_N^{(2)}]=\sigma^2 + o(1). \label{assumption MN}
\end{align}
Without further mentioning we require $\mathsf M_N>0$   a.s. for all $N$, since otherwise $\pi_N=0$ trivially.

Within this scenario different behaviour may arise. There is a broad supercritical area, where Haldane's asymptotics proves true. Here the conditional mean $\mathsf M_N$ follows largely its expectation $1+ \varepsilon_N$, but deviations of higher magnitude than $\varepsilon_N$ may occur. This area will be left behind only, if $\nu_N$ attains the same order as $\varepsilon_N$, in other words, if the random fluctuations of $\mathsf M_N$ are of order $\sqrt{\varepsilon_N}$, thus  notably surpassing $\eps_N$, the expected excess of $\mathsf M_N$ above $1$.  In this area  of transition $\pi_N$ falls below  Haldane's estimate, the subcritical range is adjacent and the random environment makes itself felt. Our result requires stronger assumptions than the classical moment conditions on $\log \mathsf{M}_N$ for branching processes in iid random environments. This is no surprise, since we are addressing properties of $\mathsf{M}_N$ itself.

\begin{thm} \label{Theorem Surv prob BPRE} For branching processes $\mathsf Z_N$, $N \ge 1$, in iid random environments,
	let $\eps_N > 0$ for all $N$ and $\eps_N \to 0$ as $N \to \infty$. 	
	Additionally to \eqref{assumption xi} assume
	\begin{align}
		\mathbb E[\mathsf U_N^{4+\delta}]=O(1), \quad \mathbb E[\mathsf M_N^{-4-\delta}] =O(1), \quad \mathbb E[\big|\mathsf M_N-\mathbb E[\mathsf M_N]\big|^{4+\delta}]=O(\nu_N^{2+\frac{\delta}{2}}) \label{assumption F'}
	\end{align}
	with some $\delta>0$. Then we have:
	\begin{itemize}
		\item[i)] If $\frac {\nu_N}{\varepsilon_N} \to 0$, then the survival probability obeys Haldane's asymptotics 
		\begin{align}
			\pi_N \sim \frac{2 \eps_N }{\sigma^2} \quad \text{ as } N \to \infty.
		\end{align}
		\item[ii)] If  $ \frac {\nu_N}{\varepsilon_N} \to \rho$ with $0<\rho< 2$, then
		\begin{align}
			\pi_N \sim \frac{(2-\rho) \eps_N}{\sigma^2} \quad \text{ as } N \to \infty.
		\end{align}
		\item[iii)] If  $ \frac {\nu_N}{\varepsilon_N} \to 2$, then
		\begin{align}
			\pi_N=o(\varepsilon_N) \quad \text{ as } N \to \infty.
		\end{align}
		\item[iv)] If   $\liminf\limits_{N\to \infty}\frac {\nu_N}{\varepsilon_N}>2$, then for large $N$ the process $\mathsf{Z}_N$ is subcritical, implying
		\begin{align}
			\pi_N = 0.
		\end{align}
	\end{itemize}		
\end{thm}
Under items i) and ii) we are concerned with supercritical, and under iv) with subcritical processes (following from Lemma \ref{Lemma Moments of F} ii) below).  Only in the border case iii) this matter remains undecided, and we may have both $\pi_N>0$ and $\pi_N=0$. Here, a  more precise  handling of $\pi_N$ is a  challenge. In case ii) the random environment $\mathsf V_N$ comes to the fore, for instance at
\begin{align}
	\pi(\mathsf V_N):= \lim_{k\to \infty} \mathbb P(Z_k>0 \mid \mathsf V_N),
\end{align}
the conditional probabilities of ultimate survival, given $\mathsf V_N$. Other than under i), it stays asymptotically random, with a limiting $\Gamma$-distribution (see Proposition \ref{Lemma convergence in prob} below).

The theorem's proof does not rely on the familiar fixed point characterization of the survival probability, but follows a new strategy. We use an explicit representation for the survival probability in terms of the \emph{shape function} as well as the corresponding new techniques, which have been introduced in \cite{Kersting2020} for branching processes in varying environments. The derived expressions have a similar structure as perpetuities, known from a financial context. Thus, it is a major step of the proof to derive a limit theorem for perpetuities with asymptotically vanishing interest rates. To the best of our knowledge, this is the first time that a relation between branching processes in iid random environment and perpetuities occurs in the literature. Alsmeyer's publication \cite{Alsmeyer2021} uses this connection in the case of iid linear fractional environments, on arXiv it appeared almost simultaneously.

The paper is organized as follows. In Section \ref{Sec prelim} we recall the required notions on branching processes in a varying environment and establish the representation of the survival probability. Our result on perpetuities is given in Section \ref{Sec Perpetuities}, it might be of some interest on its own. There, we also provide a more detailed discussion of the relevant literature concerning perpetuities. The proof of the main theorem is given in Section \ref{Sec Proof}.

\section{An expression for the survival probability} \label{Sec prelim}
In this section we derive an expression for the survival probability $\pi$ of a BPRE, using ideas of \cite{Kersting2020}. As to the notation, we find it convenient to write the probability generating function of a probability  measure $p\in \mathcal P(\mathbb N_0)$ as
\begin{align}
	p(s)= \sum_{z=0}^{\infty} s^z p[z], \qquad 0\leq s \leq 1,
\end{align}
where $p[z]$ are the weights of $p$. 

Let us first look at the special case of a  deterministic environment $v=(p_1,p_2,...)$  of probability measures    $p_1,p_2, \ldots \in \mathcal P(\mathbb N_0)$ (which  is called a varying environment). For $k \le r$ the probability of survival in generation $r$, having one individual in generation $k$, is given by
\begin{align*}
	\pi_{k,r}(v) := \mathbb P(Z_r >0\mid Z_k=1).
\end{align*}
In order to express this probability more explicitly, define for $k \le r$ the probability generating functions
\begin{align*}
	p_{k,r}(s):= p_{k+1} \circ\cdots \circ p_{r}(s), \qquad 0 \leq s \leq 1,
\end{align*}
with the convention $p_{k,k}(s)=s$. Then, similar to classical GWPs we have for $k\le r$ the formula
\begin{align}
	\pi_{k,r}(v)= 1- p_{k,r}(0). \label{surv prob and gen function}
\end{align}
In order to expand the right hand term, we define as in \cite{Kersting2020} for any  $p\in \mathcal P(\mathbb N_0)$ with positive finite mean $m=p'(1)$  the shape function $\varphi_p :[0,1) \to \R$ via the equation
\begin{align}
	\frac{1}{1-p(s)}=\frac{1}{m(1-s)}+\varphi_p(s). \label{Def. Shape function}
\end{align}
Note that $\varphi_p$ is a non-negative function and can be extended continuously to $[0,1]$ by setting 
\begin{align}
	\varphi_p(1):= \frac{p''(1)}{2p'(1)^2}.
\end{align}
Letting $\varphi_k=\varphi_{p_k}$ be shape the function belonging to $p_k$, we obtain iteratively
\begin{align*}
	\frac{1}{1-p_{0,r}(s)} &=\frac{1}{p_1'(1)(1-p_{1,r} (s))} + \varphi_1(p_{1,r}(s))= \cdots = \\
	&=\frac{1}{p_1'(1)\cdots p_r'(1) (1-s)} +\sum_{k=1}^{r} \frac{\varphi_{k} ( p_{k,r}(s))}{p_1'(1)\cdots p_{k-1}'(1)},
\end{align*}
and for $s=0$ by  \eqref{surv prob and gen function}  
\begin{align}
	\frac{1}{\pi_{0,r}(v)}= \frac{1}{m_1\cdots m_r} 
	+\sum_{k=1}^{r} \frac{\varphi_{k} (1-\pi_{k,r}(v))}{m_1\cdots m_{k-1}} 
	\label{surv prob as sum}
\end{align}
with $m_k=p_k'(1)$. 

For an iid random environment $\mathsf V=(P_1,P_2, \ldots)$, we write $M_k, M^{(2)}_k$ and $\Phi_k(s)=\varphi_{P_k}(s)$ for the mean, second factorial moment and shape function of the random probability measure $P_k$. Note that $P_k(S)= \sum_{z=0}^\infty P_k[z]S^z$ and, consequently, $\Phi_k(S)$ are  well-defined random variables for any random variable $S$ taking values in $[0,1]$.  We set
\begin{align}
	F_k^-:=\Phi_k(0)=\frac 1{1-P_k[0]}-\frac 1{M_k} ,\qquad F_k^+:=\Phi_k(1)= \frac{M^{(2)}_k}{2 M_k^2}, \qquad k \in \N.
\end{align}
Again note that the $F_k^\pm$ are iid copies of some $\mathsf F^{\pm}$. 
Throughout this work we make use of the fact that it holds
\begin{align}
	\frac{1}{2} F_k^- \leq\Phi_k(S) \leq 2 F_k^+ \quad  \text{ a.s.} \label{eq:bound Phi}
\end{align}
for any random variable $0\le S \le 1$. This estimate follows via conditioning on $P_k$ from \cite[Lemma 1]{Kersting2020}, where it was derived for a varying environment.
We set
\begin{align}
	\mu_r = \prod_{k=1}^{r} M_k, \quad \mu_0 =1.
\end{align}
Now \eqref{surv prob as sum} reads
\begin{align}
	\frac{1}{\pi_{0,r}(\mathsf V)}= \frac{1}{\mu_r} 
	+\sum_{k=1}^{r} \frac{\Phi_{k} (1-\pi_{k,r}(\mathsf V))}{\mu_{k-1}} \quad \text{a.s.}
	\label{surv prob 2}
\end{align}
In this expression we are going to take the limit $r\to \infty$, in order to proceed to 
\begin{align*}
	\pi_{k,\infty}(\mathsf V) := \lim_{r\to \infty} \pi_{k,r}(\mathsf V) ,
\end{align*}
the probability of ultimate survival with $1$ individual at generation $k$, given the environment $\mathsf V$. We set
\begin{align}
	\pi(\mathsf V):= \pi_{0,\infty}(\mathsf V).
\end{align}

\begin{prop} \label{Prop expression survival prob}
	Let $\mathsf Z=(Z_k, k \geq 0)$ be a branching process in the random environment $\mathsf V=(P_1,P_2,..)$ consisting of independent copies of $\mathsf P$. Assume $0<\e{\log\mathsf  M}< \infty$ and $\e{\log^+ \mathsf M^{(2)}}< \infty$. Then the conditional survival probability can be expressed as
	\begin{align}
		\pi(\mathsf V)= \frac{1}{\mathsf X}, \label{prob and expec combined}
	\end{align}
	where a.s. 
	\begin{align}
		\mathsf X:= \sum_{k=1}^{\infty} \frac{\Phi_{k}(1-\pi_{k,\infty}(\mathsf V))}{\mu_{k-1}} < \infty.
	\end{align}
\end{prop}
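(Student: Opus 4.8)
The plan is to let $n\to\infty$ in the exact identity \eqref{surv prob as sum} and identify the two sides. On the left, extinction being absorbing, the events $\{Z_n>0\}$ decrease to the survival event $\{Z_\infty>0\}$, so $\mathbb{P}(Z_n>0\mid V)$ decreases a.s.\ to $q:=\mathbb{P}(Z_\infty>0\mid V)$ and hence $1/\mathbb{P}(Z_n>0\mid V)$ increases a.s.\ to $1/q\in[1,\infty]$. For the leading term on the right, the strong law of large numbers applied to $\log\mu_n=\sum_{i=1}^{n}\log F_i'(1)$ gives $n^{-1}\log\mu_n\to\mathbb{E}[\log F'(1)]=:\gamma>0$ a.s., so $\mu_n\to\infty$ geometrically fast and $1/\mu_n\to0$ a.s. Consequently the partial sums
\[
S_n:=\sum_{k=0}^{n-1}\frac{\varphi_{k+1}\bigl(\mathbb{P}(Z_n=0\mid Z_{k+1}=1,V)\bigr)}{\mu_k}
\]
converge a.s.\ to $1/q$, and the whole statement reduces to proving $1/q=X$ a.s.

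For fixed $k$ the argument $\mathbb{P}(Z_n=0\mid Z_{k+1}=1,V)$ --- the quenched probability that the line of a single individual present at generation $k+1$ is extinct by generation $n$, i.e.\ $F_{k+2}\circ\dots\circ F_n(0)$ --- increases in $n$ to $\mathbb{P}(Z_\infty=0\mid Z_{k+1}=1,V)$, so by continuity of the shape function on $[0,1]$ the $k$-th summand of $S_n$ converges to the $k$-th summand of $X$. To interchange the limit and the sum I would apply dominated convergence, which needs an a.s.\ summable bound on the summands that is uniform in $n$. Using the properties of the shape function from \cite{Kersting2020} --- nonnegativity and monotonicity (it is non-increasing on $[0,1]$) --- one has
\[
0\le \varphi_{k+1}\bigl(\mathbb{P}(Z_n=0\mid Z_{k+1}=1,V)\bigr)\le\varphi_{k+1}(0)=\frac{1}{1-F_{k+1}(0)}-\frac{1}{F_{k+1}'(1)}\le\frac{1}{1-F_{k+1}(0)}.
\]
Writing $\xi$ for an offspring variable with conditional law $F_{k+1}$ and applying Cauchy--Schwarz, $F_{k+1}'(1)=\mathbb{E}[\xi\1_{\{\xi\ge1\}}\mid V]\le\mathbb{E}[\xi^2\mid V]^{1/2}\,\mathbb{P}(\xi\ge1\mid V)^{1/2}$, and since $\mathbb{E}[\xi^2\mid V]=F_{k+1}''(1)+F_{k+1}'(1)$ and $\mathbb{P}(\xi\ge1\mid V)=1-F_{k+1}(0)$ this gives
\[
\varphi_{k+1}(0)\le\frac{F_{k+1}''(1)+F_{k+1}'(1)}{F_{k+1}'(1)^{2}}=:R_{k+1}.
\]
The essential point is that $R_{k+1}$ is a function of $F_{k+1}$ alone, hence independent of $\mu_k$, which is built from $F_1,\dots,F_k$.

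It then remains to show $\sum_{k\ge0}R_{k+1}/\mu_k<\infty$ a.s. The $R_{k+1}$ are i.i.d., and the hypotheses $0<\mathbb{E}[\log F'(1)]<\infty$ (which entails $\mathbb{E}[\log^-F'(1)]<\infty$) and $\mathbb{E}[\log^+F''(1)]<\infty$ yield $\mathbb{E}[\log^+R]<\infty$; then $\log^+R_{k+1}=o(k)$ a.s.\ by Borel--Cantelli, while $\log\mu_k=\gamma k+o(k)$ a.s., so $R_{k+1}/\mu_k$ decays geometrically and $\sum_k R_{k+1}/\mu_k<\infty$ a.s. In particular $X<\infty$ a.s.\ and $\sup_n S_n<\infty$ a.s.; the latter forces the a.s.\ limit $1/q=\lim_n(1/\mu_n+S_n)$ to be finite, so $q>0$ a.s. Dominated convergence now gives $S_n\to X$, hence $X=1/q$ a.s., and finally
\[
\pi=\lim_{n\to\infty}\mathbb{P}(Z_n>0)=\mathbb{P}(Z_\infty>0)=\mathbb{E}[q]=\mathbb{E}\Bigl[\tfrac1X\Bigr],
\]
which is \eqref{prob and expec combined}. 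I expect the uniform domination to be the main obstacle: one has to invoke the right bound on the shape function and couple it --- via the Cauchy--Schwarz estimate above --- with an offspring moment that is controlled purely by the logarithmic moment assumptions; the remainder is monotone and dominated convergence bookkeeping.
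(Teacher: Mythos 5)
Your overall route is the same as the paper's: pass to the limit in \eqref{surv prob as sum}, use the strong law to dispose of $1/\mu_n$, get termwise convergence from monotonicity of the quenched extinction probabilities together with continuity of the shape function, and justify the interchange of limit and sum by dominated convergence, the dominating series being summable a.s.\ by the logarithmic moment assumptions and Borel--Cantelli. The one genuine gap is the domination step: you assert that the shape function is non-increasing on $[0,1]$, attributing this to \cite{Kersting2020}, and bound $\varphi_{k+1}(s)\le\varphi_{k+1}(0)$. No such monotonicity is stated there (Lemma 1 of \cite{Kersting2020} gives the two-sided bound $\tfrac12\varphi(0)\le\varphi(s)\le2\varphi(1)$, which is how the present paper uses it), and it is false in general: take the offspring law $\mathbb{P}(\xi=1)=1-p$, $\mathbb{P}(\xi=L)=p$, so $f(0)=0$ and $\varphi(0)=1-1/m$ with $m=1-p+pL$, while $\varphi(1)=pL(L-1)/(2m^2)$; for small $p$ and large $L$ one has $\varphi(1)\approx pL^2/2\gg \varphi(0)\approx p(L-1)$, so $\varphi$ exceeds $\varphi(0)$ near $s=1$ --- precisely the region where your arguments $\mathbb{P}(Z_n=0\mid Z_{k+1}=1,V)$ live.

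The gap is local and fixable. The bound you ultimately want, $\varphi_{k+1}(s)\le R_{k+1}$ with $R=\bigl(F''(1)+F'(1)\bigr)/F'(1)^2$, is in fact true, but it should come from the upper half of Lemma 1 in \cite{Kersting2020}, $\varphi(s)\le 2\varphi(1)=F''(1)/F'(1)^2\le R$, which is exactly the paper's choice of dominating series $Y=\sum_{k\ge0}\varphi_{k+1}(1)/\mu_k$; your Cauchy--Schwarz detour through $1-F(0)$ then becomes unnecessary. With that substitution, your summability argument ($\mathbb{E}[\log^+ R]<\infty$, hence $\log^+R_{k+1}=o(k)$ a.s., against $\log\mu_k=\gamma k+o(k)$ with $\gamma=\mathbb{E}[\log F'(1)]>0$) coincides with the paper's proof that $Y<\infty$ a.s., and the remaining steps of your write-up (finiteness of $X$, positivity of the quenched survival probability, and taking expectations via monotone limits of $\mathbb{P}(Z_n>0)$) are correct.
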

\begin{rem} In particular we have $\pi= \mathbb E[1/X] >0$. 
	The assumptions in Proposition \ref{Prop expression survival prob} are slightly stronger than the classical requirements for a  positive probability of ultimate survival, see \cite{Smith1969, Tanny1977}. However, these publications contain no representation as \eqref{prob and expec combined}.	We note that Proposition \ref{Prop expression survival prob} holds equally for a stationary and ergodic random environment.
\end{rem}
\begin{proof}
	Observe that $\mu_r \to \infty$ a.s., since 
	\begin{align}
		\mu_r = \exp\big( \sum_{k=1}^{r} \log M_k \big) = \exp \left( r \e{ \log \mathsf M} + o(r)\right), \label{limit mu}
	\end{align}
	by the strong law of large numbers.	Furthermore, we have in the limit $r \to \infty$
	\begin{align}
		\Phi_{k} (1-\pi_{k,r}(\mathsf V)) \to \Phi_{k} (1-\pi_{k,\infty}(\mathsf V)) \quad \text{ a.s.} \label{limit of phi}
	\end{align}
	by continuity. Thus, \eqref{surv prob 2} yields
	\begin{align}
		\frac{1}{\pi(\mathsf V)} = \sum_{k=1}^{\infty} \frac{\Phi_{k} (1-\pi_{k,\infty}(\mathsf V))}{\mu_{k-1}}, \label{X geq 1}
	\end{align}
	provided that we can justify the interchange of limits. By \eqref{eq:bound Phi} we have 
	\begin{align}
		\Phi_{k} (1-\pi_{k,\infty}(\mathsf V)) \leq 2 F_k^+. 
	\end{align}
	Thus, letting
	\begin{align}
		\mathsf Y:=\sum_{k=1}^{\infty} \frac{F_{k}^+}{\mu_{k-1}},
	\end{align}
	it is sufficient in view of dominated convergence to prove $\mathsf Y<\infty$ almost surely. Note that $ F_k^+ \le \exp( \log^+ M^{(2)}_k -2 \log M_k)$, and by our assumptions and the strong law of large numbers we have a.s. $\log M_k = o(k)$ and 
	$\log^+M^{(2)}_k=o(k)$, therefore a.s. $F_k^+ = e^{o(k)}$. Together with \eqref{limit mu} it follows a.s. $\mathsf Y<\infty$.\end{proof}
\noindent Later, we approximate the random variable $\mathsf X$ by $\mathsf Y$. There, we rely on the following estimate.

\begin{lem} \label{complex approx}
	For an iid random environment $\mathsf V=( P_1,P_2,\ldots)$ we have for any $0<\eta \le 1/2$
	\begin{align}
		|\Phi_k(1)&- \Phi_k(1-\pi_{k,\infty}(\mathsf V))| \\
		&\le \eta (( M_k^{(2)})^2 +  M_k^{(2)} + \mathbb E[  U_{1,k}^4 \mid \mathsf V]) + 2 F_k^+ \1_{\{\pi_{k,\infty}(\mathsf V)> (\eta  M_k)^3 \text{ or } \eta  M_k >1\}}.
		\label{difference XY}
	\end{align}
\end{lem}

\begin{proof}  By conditioning on $\mathsf V$ we reduce our claim to the case of a varying environment. Hence, we may resort to \cite[Lemma 2]{Kersting2020} yielding
	\begin{align}
		|\Phi_k(1)- \Phi_k(S)| \le 2 \frac{( M_k^{(2)})^2}{ M_k^3}(1-S)+ 2 D \frac{M_k^{(2)}}{ M_k^2}(1-S)+ \frac{2}{ M_k^2}\mathbb E[ U_{1,k}^2 ;  U_{1,k}\ge D+1\mid \mathsf V],
	\end{align}
	where  $0\le S\le 1$ and $D\in \{1,2,\ldots\}$ are allowed to be any functions of $\mathsf V$. Choose $S= 1-\pi_{k,\infty}(\mathsf V)$ and $D=\lfloor \zeta^{-1} \rfloor$ with $\zeta = \eta M_k$ and some $0<\eta \le 1/2$. Then we obtain on the event $\{\pi_{k,\infty}(\mathsf V) \le \zeta^{3} \le 1\}=\{\pi_{k,\infty}(\mathsf V)\le \zeta^3,  D \ge 1\}$
	\begin{align}
		|\Phi_k(1)- \Phi_k(1-\pi_{k,\infty}(\mathsf V))| &\le 2 \frac{( M_k^{(2)})^2}{ M_k^3}\zeta^3+ 2  \frac{M_k^{(2)}}{ M_k^2}\zeta^2 + 2\frac{\zeta^2}{M_k^2}\mathbb E[U_{1,k}^4 \mid \mathsf V]\\
		&\le \eta(( M_k^{(2)})^2+M_k^{(2)}+ \mathbb E[U_{1,k}^4 \mid \mathsf V]).
	\end{align}
	On the complementary event $\{1-\pi_{k,\infty}(\mathsf V) \le \zeta^{3} \le 1\}^c$ we bound by means of \eqref{eq:bound Phi} leading to \eqref{difference XY}.
\end{proof}

\section{On perpetuities with small interest rates} \label{Sec Perpetuities}
Let $(A_k,B_k)$, $k \geq 1$, be independent copies of the random pair $(\mathsf A,\mathsf B)$,  where $\mathsf A$ and $\mathsf B$ are non-negative random variables  with finite means, and $\mathsf A$ is non-degenerate. Set $C_k:=A_1 A_2 \cdots A_k$ for $k \ge 1$, $C_0=1$, and consider the series
\begin{align}
	\mathsf Y:=\sum_{k=1}^{\infty } B_k C_{k-1},
	\label{series}
\end{align}
which in a financial context is called a \emph{perpetuity}, see \cite{Alsmeyer2009} and the literature cited therein. We allow $\mathsf Y$ to take the value $\infty$, see Remark \ref{rem:almost sure convergence Y}. The random variable $\mathsf Y$ fulfils a distributional recursion, the \emph{annuity equation} \cite{Buraczewski2016}
\begin{align}
	\mathsf Y \stackrel{d}{=} \mathsf A \mathsf Y+\mathsf B, \label{annuity equation}
\end{align}
where on the right-hand side $(\mathsf A,\mathsf B)$ and $\mathsf Y$ are assumed to be independent.

In what follows we study the limiting behaviour of a sequence of perpetuities 
\begin{align*}
	\mathsf Y_N\stackrel d= \mathsf A_N\mathsf Y_N+ \mathsf B_N, \quad N\ge 1,
\end{align*}
as the expectation of $\mathsf A_N$ tends to $1$ (which in a financial setting corresponds to asymptotically vanishing interest rates).  Let the  numbers $\alpha_N < 1$,  $\upsilon_N>0$ and  $\beta_N>0$, $ N\ge 1$, be given by
\begin{align}
	\mathbb E[\mathsf A_N]=1-\alpha_N, \quad \text{Var}(\mathsf A_N)=\upsilon_N,\quad \mathbb  E[\mathsf B_N] = \beta_N .
\end{align}
The asymptotic behaviour of $\mathsf Y_N$ is dictated by the expectation and variance of $\mathsf A_N$, whereas the expectation of $\mathsf B_N$ acts just as a scaling factor.
Depending on the asymptotic value of the ratio $\alpha_N/\upsilon_N$ we observe different limiting distributions for rescaled versions of $\mathsf Y_N$. 
\begin{thm} \label{Thm convergence perpetuity} Assume that as $N\to \infty $
	\begin{align}
		\alpha_N \to 0,  \quad \upsilon_N \to 0, \quad \beta_N\to \beta
	\end{align}
	with $0<\beta <\infty$, furthermore $\mathbb E[{|\mathsf A_N-1|^{2+\delta}}]= o(|\alpha_N|+\upsilon_N)$ and  $\mathbb E[{\mathsf B_N^{1+\delta}}]=O(1)$   for some \mbox{$\delta>0$}. Additionally, assume
	\begin{align}
		\frac{\alpha_N}{\upsilon_N} \to \gamma \quad \text{ with } -\frac 12\le \gamma \leq \infty.
	\end{align}
	Then we have: 
	\begin{itemize}
		\item [i)] If $\gamma=\infty$, then $\alpha_N \mathsf Y_N\to \beta$ in probability, as $N \to \infty$.
		\item [ii)] If $-1/2 <\gamma <\infty$, then $\upsilon_N \mathsf Y_N$ is asymptotically inverse $\Gamma$-distributed, with density\\ $cx^{-a -2} e^{-b/x} dx$, where $(a,b)=(2\gamma,2\beta)$ and $c= (2\beta)^{2\gamma+1}/\Gamma(2\gamma+1)$.
		\item [iii)] If $\gamma = -1/2$, then $\upsilon_N \mathsf Y_N \to \infty$ in probability, as $N \to \infty$.
	\end{itemize}
\end{thm}
Note that,  as long as $\gamma\neq0$,  also the $|\alpha_N|$ may serve as scaling factors. This might appear more natural, since for positive $\alpha_N$ we indeed have $\mathbb E[\mathsf Y_N]=\beta_N/\alpha_N$. However, for $\gamma=0$ they fail to be  usable for this purpose.

\begin{rem}\label{rem:almost sure convergence Y}{\em 
		The random variables $\mathsf Y_N$ are well defined, since we allow the value $\infty$. Either $\mathsf Y_N<\infty$ a.s. or $\mathsf Y_N=\infty$ a.s., by a $01$-law. The second case may occur in under iii) of Theorem \ref{Thm convergence perpetuity}. In both cases $\mathsf Y_N$ fulfils the annuity equation, since $\mathsf{Y}_N=\infty$ implies a.s. $\mathsf{A}_N>0$.
		
		Our theorem does not require any assumption on the convergence or divergence of the series in \eqref{series}, nevertheless there is a close connection. The classical criterion of Vervaat \cite{Vervaat1979} states that  in \eqref{series} a.s. convergence holds in case of $\mathbb E[\log \mathsf A] < 0$, and a.s. divergence in case of $\mathbb E[\log \mathsf A] > 0$ (for an ultimate criterion see \cite{Goldie2000}). In our case we have $\mathsf A_N\approx 1$, thus $\log \mathsf A_N \approx (\mathsf A_N-1) - (\mathsf A_N-1)^2/2 $ and typically $\mathbb E[\log \mathsf A_N] \approx - \alpha_N- \upsilon_N/2$. 
		
		Hence, for $\gamma >-1/2$ we expect a.s. convergence for large $N$, which will indeed result from Lemma \ref{Lemma tightness} below. Note that we may have a.s. convergence, even if $\alpha_N \le 0$ for all $N$, provided that the variance $\upsilon_N$ stays sufficiently large. Here $\E [\log \mathsf{A}_N] <0$, but $\E [\mathsf A_N]>0$ and consequently $\mathbb E[ \mathsf Y_N]=\infty$.
		
		On the other hand, for $\gamma < -1/2$ we typically have $\mathsf Y_N=\infty$ a.s. for large $N$. This range is of little interest in our context, thus we leave it aside in our theorem and focus on  the border  case $\gamma = -1/2$. Then a.s. convergence as well as a.s. divergence of \eqref{series} may occur.}
\end{rem}

\begin{rem}{\em  In the literature several papers address the limiting behaviour of perpetuities. A result matching to our Theorem~\ref{Thm convergence perpetuity}~ii) is contained in Dufresne's seminal paper   \cite{Dufresne1990}, see Proposition~4.4.4. therein. It requires stronger assumptions and aligns to ours only in case that $B$ is a.s. constant for all $N$. We note that Dufresne's proof rests on an invalid argument (namely, that distributional convergence in the Skorohod sense of some stochastic processes $(Z_n (t), t \geq 0)$  to a process $(Z(t), t \geq 0)$ implies convergence of $Z_n(\infty)=\lim_t Z_n(t)$ to $Z(\infty)=\lim_t Z(t)$ in distribution, provided the a.s. existence of these limits. In order to validate  Dufresne's proof it would be necessary to show that the random variable $T$, introduced on top of page 62, does not depend on $n$).
		
		Blanchet and Glynn \cite{Blanchet2022} as well as Iksanov et al. \cite{Iksanov2021} present results which belong to the range of part i) of our Theorem. Besides laws of large numbers they derive advanced approximations to the normal distribution. In another contribution Iksanov et al. \cite{Iksanov2023} provide refined asymptotics for the scaled logarithm of perpetuities in a range where $\alpha_N$ and $\upsilon_N$ would not converge to $0$. 
		
		Under the condition $\E [ \log \mathsf{A}]\geq 0$, Hitczenko and Weso\l owski \cite{Hitczenko2011} consider the convergence of rescaled partial sums of $\mathsf{Y}$.
		
		As a side remark, we note that equation \eqref{annuity equation} can be viewed as an equation for the stationary distribution of some real-valued Markov chain $(W_n, n \geq 0)$.  General results in this spirit have been obtained by Borovkov and Korshunov in \cite{Borovkov1992,Korshunov1994}, they show that  the limiting stationary distribution of sequences of Markov chains is $\Gamma$-distributed in a certain asymptotic regime. Thus, one might wonder, if our Theorem \ref{Thm convergence perpetuity} fits into this framework in the sense that the Markov chains $(1/W_n,  n \geq 0)$ can be integrated therein. However, it is readily checked that there is no match of the respective asymptotic regimes. Indeed, our result might give rise to another general result on asymptotic stationary distributions of Markov chains.
	}
\end{rem}

Let us turn to the proof of Theorem \ref{Thm convergence perpetuity}. We shall establish convergence of the corresponding Laplace transforms and characterize the limit by a  second order linear differential equation of singular type, related to the Bessel differential equation.  This approach necessitates that the terms $\mathsf A$ and $\mathsf B$ are non-negative. We note that a corresponding differential equation for characteristic functions is hardly available. This would require the existence of the second moment, which for the inverse $\Gamma$-distribution  is  in general not at disposal. Thus, if one would like to overcome the assumption of non-negativity, a different approach  seems to be needed.

We prepare the proof by  three lemmata. Let $\tau_N$ denote either $|\alpha_N|$ or $\upsilon_N$, and let 
\begin{align}
	\ell_N(\lambda):=\mathbb E[\exp(-\lambda \tau_N \mathsf Y_N)]  , \quad  \lambda \ge 0,
\end{align}
be the Laplace transform of $\tau_N \mathsf Y_N$. Recall, that $\ell_N$ is arbitrarily often differentiable at any $\lambda >0$. We set $e^{-\infty}=0$, thus $\ell_N(\lambda)=0$ for all $\lambda>0$ if a.s. $\mathsf Y_N=\infty$. This case causes no problems in the following proofs.

\begin{lem} \label{Lemma Laplace transform B and A}	
	Under the assumptions of Theorem \ref{Thm convergence perpetuity} we have for any $\lambda >0$ as $N\to \infty$
	\begin{align}
		\frac 12 \upsilon_N\lambda \ell_N''(\lambda)-\alpha_N  \ell_N'(\lambda) - \beta \tau_N  \ell_N(\lambda) = o(|\alpha_N|+\upsilon_N).
		\label{DGLN}
	\end{align}
\end{lem}
\begin{proof} Let $\ell(\lambda)=\mathbb E[e^{-\lambda \tau \mathsf Y}]$ be the Laplace transform of $\tau \mathsf Y$ with $\tau >0$ and with a single perpetuity $\mathsf Y$.   By means of equation \eqref{annuity equation} and independence of $(\mathsf A,\mathsf B)$ and $\mathsf Y$ we have for any $\lambda \ge 0$
	\begin{align}
		\ell(\lambda)=  \mathbb E[\mathbb E[e^{-\lambda\tau(\mathsf A\mathsf Y+\mathsf B)} \mid (\mathsf A,\mathsf B)]] = \mathbb E[e^{-\lambda\tau \mathsf B}\ell(\lambda \mathsf A)].
		\label{annuity equation2}
	\end{align}	
	We approximate the right-hand expectation by means of Taylor expansions of its integrand, which is done in three steps. Let $\eta>0$.
	
	i)  First, by restricting expectations onto the event $\{\tau \mathsf B\le \eta\}$ and its complement, we have
	\begin{align}
		\big| \mathbb E[e^{-\lambda \tau \mathsf B} &\ell(\lambda \mathsf A)]- \mathbb E[(1-\lambda \tau \mathsf B)\ell(\lambda \mathsf A)] \big|
		\\& \le \mathbb E [|e^{-\lambda \tau \mathsf B}-(1-\lambda \tau \mathsf B)|\ell(\lambda \mathsf A);\tau \mathsf B\le \eta]
		+\mathbb E [|e^{-\lambda \tau \mathsf B}-(1-\lambda \tau \mathsf B)|\ell(\lambda \mathsf A);\tau \mathsf B> \eta]
		\\& \le \mathbb E [\lambda^2 \tau^2 \mathsf B^2 ; \tau \mathsf B\le \eta]+ \E [2+\lambda \tau \mathsf B;  \tau \mathsf B>\eta] 
		\\& \le \lambda^2 \eta \tau \mathbb E[\mathsf B] +  \big( \frac 2{\eta^{1+\delta}} + \frac \lambda{\eta^\delta}\big) \tau^{1+\delta}\mathbb E [\mathsf B^{1+\delta}]. \label{eq:two terms}
	\end{align}
	Now we take the dependence on $N$ into account. Let $\kappa >0$. By the assumptions of Theorem \ref{Thm convergence perpetuity} and by choosing $\eta$ sufficiently small, the first term in \eqref{eq:two terms} becomes smaller than $\kappa \tau_N$. Since $\tau_N^\delta \to 0$, also the second term in \eqref{eq:two terms} becomes smaller than $\kappa \tau_N$, if only $N$ is large enough. This entails
	\begin{align}
		\mathbb E[e^{-\lambda \tau_N \mathsf B_N} &\ell_N(\lambda \mathsf A_N)]=\mathbb E[ \ell_N(\lambda \mathsf A_N)]- \lambda \tau_N\mathbb E[  \mathsf B_N\ell_N(\lambda \mathsf A_N)] + o(\tau_N). \label{approx1}
	\end{align}
	
	ii) The  right-hand expectations in \eqref{approx1} are handled similarly, now by restricting them to the event $\{ |\mathsf A-1| \le \eta\}$ and its complement. For $0<\eta \le 1/2$, $\lambda >0$  we have 
	\begin{align*}
		\lambda \sup_{|a-1|\le \eta} |\ell'(a\lambda)| \le \mathbb E[\lambda \tau \mathsf Y e^{-(1-\eta)\lambda \tau \mathsf Y} ] \le \sup_{z\ge 0} z e^{- z/2 }.
	\end{align*}
	Hence, with some random $A'$ between $\mathsf A$ and 1,
	\begin{align*}
		\big|\mathbb E[\mathsf B\ell(\lambda \mathsf A)] -  \mathbb E[\mathsf B \ell(\lambda)]\big| 
		&\le  \mathbb E [\mathsf B |\ell'(\lambda A')|\lambda |\mathsf A-1|; |\mathsf A-1| \le \eta] +  \mathbb E[ 2\mathsf B; |\mathsf A-1| > \eta]\\
		&\le \lambda\eta \mathbb E[\mathsf B]\sup_{|a-1|\le \eta} |\ell'(a\lambda)| + 2 \mathbb E[\mathsf B^{1+\delta}]^{\frac1 {1+\delta}} \mathbb P (|\mathsf A-1| >\eta )^{\frac \delta{1+\delta}} \\
		&\le \eta \mathbb E[\mathsf B] \sup_{z\ge 0} z e^{- z/2 } + 2 \mathbb E[\mathsf B^{1+\delta}]^{\frac1 {1+\delta}} \Big(\frac {\mathbb E[(\mathsf A-1)^{2} ] }{\eta^{2}}\Big)^{\frac \delta {1+\delta}}.
	\end{align*}
	
	Regarding the dependence on $N$, by the theorem's assumptions, by suitably adapting $\eta$ and by noting $\EE{(\mathsf{A}_N-1)^2}=\upsilon_N + \alpha_N^2 \to 0$, this expression can be, with increasing $N$, made smaller than any $\kappa >0$. In other terms:
	\begin{align}
		\mathbb E[\mathsf B\ell_N(\lambda \mathsf A_N)] =  \beta \ell_N(\lambda) + o(1) . \label{approx2}
	\end{align}
	
	iii) Finally, we have for $\lambda>0$ and $0<\eta \le 1/2$ with some $A'$ between $\mathsf A$ and 1
	\begin{align*}
		\Big|\mathbb E [\ell(\lambda \mathsf A)]&- \mathbb E[ \ell(\lambda)+ \ell'(\lambda)\lambda(\mathsf A-1) + \frac 12 \ell''(\lambda)\lambda^2(\mathsf A-1)^2]\Big| \\
		&\le \mathbb E[ |\ell'''(\lambda \mathsf A')|\lambda^3 |\mathsf A-1|^3; |\mathsf A-1| \le \eta)]\\
		&\qquad \mbox{}+ \mathbb E[2+ |\ell'(\lambda)|\lambda |\mathsf A-1|+ \ell''(\lambda)\lambda^2|\mathsf A-1|^2 ; |\mathsf A-1|>\eta] \\
		&\le \eta \mathbb E[(\mathsf A-1)^2] \sup_{|a-1|\le 1/2} |\ell'''(a\lambda)|+ \Big(\frac 2{\eta^{2+\delta}}+ \frac{|\ell'(\lambda)|}{\eta^{1+\delta}}+ \frac{\ell''(\lambda)}{\eta^{\delta} }\Big) \mathbb E[|\mathsf A-1|^{2+\delta} ].
	\end{align*}
	Regarding the dependence on $N$, the derivatives of $\ell$, including the preceding supremum, stay again bounded with $N$. Further,  we have $\EE{(\mathsf{A}_N-1)^2}=O( \upsilon_N + |\alpha_N|)$ and $\mathbb E[|\mathsf A_N-1|^{2+\delta}]= o(\upsilon_N+|\alpha_N|)$ by assumption.  Thus, letting $\kappa >0$ and adapting $\eta$ once more, the above right-hand term becomes smaller than $\kappa(|\alpha_N|+\upsilon_N)$ for large $N$. It follows
	\begin{align}
		\mathbb E [\ell_N(\lambda \mathsf A_N)]=\ell_N(\lambda)+  \mathbb E[ \ell_N'(\lambda)\lambda(\mathsf A_N-1) + \frac 12 \ell_N''(\lambda)\lambda^2(\mathsf A_N-1)^2]+ o(|\alpha_N|+\upsilon_N).
		\label{approx3}
	\end{align}
	Combining equations \eqref{approx1}, \eqref{approx2} and \eqref{approx3} we arrive at
	\begin{align*}
		\mathbb E[e^{-\lambda \tau_N \mathsf B_N} \ell_N(\lambda \mathsf A_N)]= \ell_N(\lambda)- \alpha_N \lambda \ell_N'(\lambda)+ \frac 12 \upsilon_N\lambda^2 \ell_N''(\lambda) - \beta \tau_N \lambda \ell_N(\lambda) + o(|\alpha_N|+\upsilon_N). 
	\end{align*}
	Inserting this approximation into \eqref{annuity equation2} yields our claim. \end{proof}

The next lemma applies under weaker assumptions on the sequence $(A_k,B_k)$, which will be of advantage later.
\begin{lem}
	\label{Lemma tightness}
	Assume that the sequence $(A_k)$ consists of independent copies of the non-negative random variable $\mathsf A$, and that $(B_k)$ contains non-negative random variables with identical finite mean $\mathbb E[\mathsf B]>0$. Then, if $\mathbb E[\mathsf A^u]<1$ for some $u>0$, we have for any $c>0$
	\begin{align}
		\mathbb P\big( (1- \mathbb E[\mathsf A^u]^{\frac 1u}) \mathsf Y > c \mathbb E[\mathsf B]\big) \le 2 c^{-\frac u{1+u}}. 
	\end{align}
\end{lem}

\begin{proof} The cases $\EE{\mathsf{A}}=0$ and $\EE{\mathsf B}=0$ cause no problems, thus we assume $\EE{\mathsf{A}}>0$ and $\EE{\mathsf{B}}>0$. Consider  the process $M_k=C_k^u/\mathbb E[\mathsf A^u]^{k}$, $k\ge 0$, which is a non-negative martingale. For the stopping time $T=\min\{k \ge 0: M_k > a\}$  we obtain by means of the optional stopping theorem  for any $t \in \mathbb N$
	\begin{align}
		1=M_0=\mathbb E[M_{T\wedge t}] \geq a \mathbb P(T \leq t),
	\end{align} 
	thus
	\begin{align}
		\mathbb P(T < \infty)\leq 1/a. \label{Stopping time T}
	\end{align}
	On the event $\{T=\infty\}$ we have 
	$C_k  \leq a^{\frac 1u} \mathbb E[\mathsf A^{u} ]^{\frac{k}{u}}$ for all $k \ge 1$.
	This yields for any $\tau >0$  
	\begin{align}
		\mathbb P(\tau \mathsf Y> c \mathbb E[\mathsf B]) &\leq  \frac 1a+ \frac{\tau}{c \mathbb E[\mathsf B]}\mathbb E[\mathsf Y;T=\infty]  \\&\leq \frac{1}{a} +\frac{\tau}{c\mathbb E[\mathsf B]} \sum_{k=1}^{\infty} a^{\frac{1}{u } }\mathbb E\big[B_k\mathbb E[\mathsf A^{u} ]^{\frac{k-1}{u}}\big] \\ &= \frac{1}{a} +\frac{\tau a^{\frac{1}{u}}}  {c(1-\mathbb E[\mathsf A^{u} ]^{\frac 1u})}.
		\label{abschaetzung}
	\end{align}
	Letting $a=c^{u/(1+u)}$ and $\tau= 1-\mathbb E[\mathsf A^{u} ]^{\frac 1u}$ yields the claim.\end{proof}

\begin{rem} {\em  The previous inequality shows that the tail probabilities of the random variable $\mathsf Y$ decrease at a polynomial rate. This issue attracted some attention in the literature. Sophisticated treatments by Goldie \cite{Goldie1991} and others \cite{Collamore2013,Buraczewski2016} show that under additional assumptions we indeed have \mbox{$\mathbb P(\mathsf Y>c) \sim dc^{-\xi}$} with  constants $d, \xi>0$, where $\xi$ fulfils $\mathbb E[\mathsf A^\xi]=1$. Moreover, Collamore and Vidyashankar  presented an upper bound  of the form $\mathbb P(\mathsf Y>c) \le d'c^{-\xi}$ with some $d'>d$ \cite[Proposition 2.1]{Collamore2013}, or $\mathbb P((d')^{-1/\xi}\mathsf Y> c ) \le c^{-\xi}$. With our approach we underestimate the correct exponent $\xi$, however this is not our objective. We aim at suitable scaling factors for $\mathsf Y$. Using the formula 
		$\mathbb P((d')^{-1/\xi}\mathsf Y> c ) \le c^{-\xi}$ would require to sufficiently decrease $d'$, and at the same time get a handle on $\xi$, a formidable task. In contrast, our Lemma \ref{Lemma tightness} provides the clear-cut, explicit scaling factor $1- \EE{\mathsf A^u}^{1/u}$. As we shall see, it has the right magnitude. (As to exponential tail bounds, compare \cite{Hitczenko2009}).}
\end{rem}

\begin{lem} \label{lemma DGL solution}
	Let $a\in \mathbb R$, $b>0$. Then the linear differential equation
	\begin{align}
		\lambda \ell''(\lambda)= a\ell'(\lambda)+ b\ell(\lambda) , \quad \lambda >0, 
		\label{DGL}
	\end{align}
	has the  solution
	\begin{align}
		l (\lambda)= c\int_0^\infty e^{-\lambda x}x^{-a-2} e^{-b/x} \, dx,
		\label{LapTrans}
	\end{align}
	which for $c>0$ is the Laplace transform of the measure on $\mathbb R^+$ with the density $cx^{-a-2} e^{-b/x}$ with respect to the Lebesgue measure. The Laplace transform of any other non-vanishing measure on $\mathbb R^+$ fails to solve equation \eqref{DGL}.
\end{lem}

\begin{proof} The differential equation can be solved by means of modified Bessel functions. Our approach is  more direct and elementary. By partial integration we have
	\begin{align*} \lambda \int_0^\infty e^{-\lambda x}x^{-a}e^{-b/x} \, dx = \int_0^\infty e^{-\lambda x} \big( -a x^{-a-1}+bx^{-a-2}\big)e^{-b/x}\, dx ,
	\end{align*}
	which  transforms directly into  equation \eqref{DGL} for the function $l$ from \eqref{LapTrans}.
	
	For the second claim note that $l(\lambda)\to 0$ as $\lambda \to \infty$. Consider some non-vanishing measure $\mu$ on $\mathbb R^+$ with finite $h(\lambda)=\int_0^\infty e^{-\lambda x}\, \mu(dx) $ for all $\lambda >0$. Suppose that $h$ solves \eqref{DGL}. Since the $n$-th derivative of $h$ is equal to $\pm \int_0^\infty x^n e^{-\lambda x}\, \mu(dx)$, we have $h'(\lambda), h''(\lambda) \to 0$ as $\lambda \to \infty$. From \eqref{DGL} it follows that as well $h(\lambda)\to 0$, as $\lambda \to \infty$ (meaning that $\mu$ has no atom at zero).
	
	Let $\lambda_1>0$. Since $h(\lambda_1)$ and $l(\lambda_1)$ are strictly positive, there is a $c>0$ in \eqref{LapTrans} such that $h(\lambda_1)=l(\lambda_1)$.  Thus the difference $d=h-l$  satisfies $d(\lambda_1)=0$ and $d(\lambda)\to 0$ as $\lambda \to \infty$. Moreover $d$ fulfils the equation \eqref{DGL}. Suppose that $d$ does not vanish on the interval $(\lambda_1,\infty)$. Then $d$ has  at some point $\lambda_2>\lambda_1$ a global maximum or minimum, in particular $d'(\lambda_2)=0$. In case of a maximum we have $d(\lambda_2)>0$ and $d''(\lambda_2)\le 0$, which  contradicts \eqref{DGL}. The case of a minimum is analogue, thus $d$ has to vanish on the whole interval $(\lambda_1,\infty)$. Since $\lambda_1>0$ is arbitrary, we conclude that $h=l$, and our claim follows by uniqueness of Laplace transforms.\end{proof}
\begin{proof}[Proof of Theorem \ref{Thm convergence perpetuity}]
	We prove convergence of the Laplace transform of $\alpha_N \mathsf Y_N$ or $\upsilon_N \mathsf Y_N$ towards the Laplace transform of the limiting distributions. Note that convergence in distribution implies not only convergence of the Laplace transforms, but also  convergence of their derivatives at points $\lambda >0$, being of the form $\pm \mathbb E[(\tau_N\mathsf Y_N)^n e^{-\lambda \tau_N\mathsf Y_N}]$.
	
	In order to obtain tightness we shall apply Lemma \ref{Lemma tightness}. For $0<u<1$ and $0<\eta <1$ we have
	\begin{align*}
		\Big| \mathbb{E} \big[\mathsf A^u&-\big(1+u(\mathsf A-1)+ \frac{u(u-1)}2 (\mathsf A-1)^2 \big) \big] \Big|\\
		&\le\mathbb E \Big[(\mathsf A')^{u-3}|\mathsf A-1|^3;|\mathsf A-1|\le \eta\Big] + \mathbb E\Big[|\mathsf A-1|^u + |\mathsf A-1|+ |\mathsf A-1|^2 ; |\mathsf A-1|>\eta \Big] \\
		&\le \eta (1-\eta)^{u-3} \mathbb E\big[(\mathsf A-1)^2\big]+ \Big(\frac{1}{\eta^{2+\delta-u}}+ \frac 1{\eta^{1+\delta}} + \frac 1{\eta^\delta} 
		\Big)\mathbb E\big[|\mathsf A-1|^{2+\delta}\big].
	\end{align*}
	Turning to the sequence $(\mathsf Y_N)$, let $\kappa >0$. As in the proof of Lemma \ref{Lemma Laplace transform B and A} iii),  by a suitable choice of $\eta$, the right-hand expression falls below $\kappa(|\alpha_N|+\upsilon_N)$ for large $N$. It follows
	\begin{align*}
		\mathbb E[\mathsf A_N^u] = 1 -u\Big(\alpha_N+\frac {1-u}2 \upsilon_N\Big) + o(|\alpha_N|+\upsilon_N),
	\end{align*}
	consequently
	\begin{align}
		1- \mathbb E[\mathsf A_N^u]^{\frac 1u} = \alpha_N+ \frac {1-u}2 \upsilon_N + o(|\alpha_N|+\upsilon_N) .
		\label{tightness approx}
	\end{align}
	i)  If $\gamma=\infty$, then $\upsilon_N=o(\alpha_N)$.  Then \eqref{tightness approx} turns into $1- \mathbb E[\mathsf A_N^u]^{1/u} \sim \alpha_N$, and  the sequence $(\alpha_{N} \mathsf Y_N)$ is tight in view of Lemma \ref{Lemma tightness}. Let $\alpha_{N'} \mathsf Y_{N'}$ be a subsequence converging in distribution and with limiting distribution $\ell(\lambda)$.  Dividing both sides by $ \alpha_{N'}$  in \eqref{DGLN} and taking $\upsilon_N=o(\alpha_N)$ into account we obtain the limiting differential equation
	\begin{align*}
		\ell'(\lambda)+\beta \ell(\lambda) =0, \qquad \ell(0)=1,
	\end{align*}
	which has the unique solution $\ell(\lambda)= e^{-\beta \lambda}$, the Laplace transform of the Dirac measure at point $\beta$. As is well-known, this implies part i) of Theorem \ref{Thm convergence perpetuity}.
	
	ii) Next assume $-1/2 < \gamma <  \infty$. Then, because of \eqref{tightness approx} $1- \mathbb E[\mathsf A_N^u]^{1/u} \sim (\gamma + \frac{1-u}2)\upsilon_N$, which becomes positive for large $N$, if $u$ is chosen sufficiently small. Here the sequence $(\upsilon_N \mathsf Y_N)$ is tight in view of Lemma \ref{Lemma tightness}. Consider a convergent subsequence $\upsilon_{N'} \mathsf Y_{N'}$. Choosing $\tau_N = \upsilon_N $ in \eqref{DGLN}  we obtain the limiting differential equation
	\begin{align}
		\frac{1}{2} \lambda  \ell''(\lambda)-\gamma  \ell'(\lambda) -\beta \ell(\lambda)  =0, \qquad \ell(0)=1.  
		\label{bessel dgl}
	\end{align}
	Lemma \ref{lemma DGL solution} yields that the limiting distribution is the inverse $\Gamma$-distribution, as stated.
	
	iii) In case of $\gamma = -1/2$  we  consider a convergent subsequence $\upsilon_{N'} \mathsf Y_{N'}$ within the extended compactified range  $[0, \infty]$. Then the limiting Laplace transform $\ell(\lambda)$ again solves \eqref{bessel dgl}, but the corresponding distribution  may now result in  a defective probability measure on $\mathbb R^+$. The measure given by \eqref{LapTrans} with $a=2\gamma=-1$ is no longer finite and thus impractical. From Lemma~\ref{lemma DGL solution} we see that there is no other non-vanishing measure on $\R^+$ at disposal. Therefore, the limiting distribution of $\upsilon_{N'} \mathsf Y_{N'}$ has to be concentrated at the point $\infty$, which implies our claim. \end{proof}

\section{Proof of Theorem \ref{Theorem Surv prob BPRE}} \label{Sec Proof}
In view of Proposition \ref{Prop expression survival prob} we determine the limit of $\mathbb E[1/\mathsf X_N]$ as $N\to \infty$, recall
\begin{align}\label{formula X}
	\mathsf X:= \sum_{k=1}^{\infty} \frac{\Phi_{k}(1-\pi_{k}(V))}{\mu_{k-1}} < \infty \quad  \text{ a.s.}
\end{align} 
To this end we shall prove that this expectation may  be replaced by $\mathbb E[1/\mathsf Y_N]$, where
\begin{align} \label{formula Y}
	\mathsf Y:= \sum_{k=1}^{\infty} \frac{F_k^+}{\mu_{k-1}}.
\end{align}
This random random variable has the form \eqref{series}, thus we may apply Theorem \ref{Thm convergence perpetuity} to obtain the limiting distribution of the scaled $\mathsf Y_N$, respectively $\mathsf X_N$. In order to switch to expectations, we shall show uniform integrability of the scaled $1/\mathsf X_N$. 
Then it remains to determine the corresponding expectation of the limiting distribution. We prepare the proof  by several lemmata. 
\begin{lem}\label{Lemma Moments of F}
	Under the assumptions of Theorem \ref{Theorem Surv prob BPRE}, if $\nu_N=o(1)$ then we have
	\begin{itemize}
		\item[i)] $\EE{\mathsf{M}_N^{-u}}= 1-u\eps_N + \frac{u(u+1)}{2}\nu_N + o(\eps_N+\nu_N)$,  for $0\le u \le 2$,
		\item[ii)] $\EE{\log \mathsf M_N}= \eps_N- \frac 12 \nu_N +o(\eps_N+\nu_N)$,
		\item[iii)] $\EE{\frac{\mathsf M_N^{(2)}}{\mathsf{M}_N^2}}= \sigma^2+o(1)$.
	\end{itemize}  
\end{lem}

\begin{proof} i) Let $0< \eta < 1$. Similar as in the proof of Lemma \ref{Lemma Laplace transform B and A} we split the expectation $\EE{\mathsf{M}^{-u}}$ into its parts on the event $\{|\mathsf{M}-1| \leq \eta \}$ and the complement. Thus
	\begin{align}
		\big| \mathbb E[\mathsf M^{-u} ] - \mathbb E [1-u(\mathsf M-1)&+ \frac {u(u+1)}2 (\mathsf M-1)^2 ] \big| \\ 
		&\leq \mathbb E\big[ \frac{u(u+1)(u+2)}6(1-\eta)^{-u-3} |\mathsf M-1|^3; |\mathsf M-1|\le \eta \big] \\ 
		&\qquad \mbox{}+ \mathbb E\big[ \mathsf M^{-u}+1+u|\mathsf M-1| + \frac{u(u+1)}2 (\mathsf M-1)^2 ; |\mathsf M-1|>\eta \big]\\
		&\le 4\eta (1-\eta)^{-u-3} \mathbb E\big[(\mathsf M-1)^2] + \mathbb E[ \mathsf M^{-2u}]^{\frac 12} \mathbb P(|\mathsf M-1| >\eta)^{\frac 12} \\
		&\qquad \mbox{}+ \Big( \frac 1{\eta^{2+\delta}}+ \frac 2{\eta^{1+\delta}}+ \frac 3{\eta^\delta} \Big) \mathbb E\big[ |\mathsf M-1|^{2+\delta} \big].
	\end{align}
	Turning to the sequence $(\mathsf M_N)$ we obtain with $\delta>0$ similar as in   \eqref{assumption F'}
	\begin{align}
		\big| \mathbb E[\mathsf M^{-u} ] - (1- u\varepsilon_N+ \frac{u(u+1)}2(\nu_N+ \varepsilon_N^2)) \big| 
		\leq 4\eta (1-\eta)^{-u-3} (\nu_N +\varepsilon_N^2) + O(\nu_N^{1+\delta/2}+\varepsilon_N^{2+\delta}).
	\end{align}
	Let $\kappa>0$. Then,  because of $\varepsilon_N,\nu_N=o(1)$, there is an $\eta >0$  such that the right-hand expression is smaller than $\kappa (\varepsilon_N+\nu_N)$ for large $N$. In other terms: The right-hand expression is of order $o(\varepsilon_N+\nu_N)$, and our claim follows.
	
	ii) By the same line of argument, using $|\log x| \le x+x^{-1}$ for all $x>0$, we have for $0<\eta < 1$
	\begin{align}
		\big| \mathbb E[\log \mathsf M_N] - \mathbb E[ (\mathsf M_N-1)&- \frac 12(\mathsf M_N-1)^2 ]\big|\\
		& \le \mathbb E[ \frac 13 (1-\eta)^{-3} |\mathsf M_N -1|^3 ; |\mathsf M_N-1| \le \eta]\\ 
		&\qquad \mbox{}+ \mathbb E\big[ (\mathsf M_N+ \mathsf M_N^{-1}) + |\mathsf M_N-1| + \frac 12 (\mathsf M_N-1)^2 ; |\mathsf M_N-1|> \eta],
	\end{align}
	and our claim is confirmed in much the same vein as under i).	
	
	iii) 
	We have
	\begin{align}
		\Bigg|\EE{\frac{\mathsf M_N^{(2)}}{\mathsf M_N^2}} - \EE{\mathsf M_N^{(2)}} \Bigg| \le \EE{\Bigg| \mathsf M_N^{(2)} \left(\frac{1}{\mathsf M_N^2} -1 \right) \Bigg|}\leq \EE{\mathsf (M_N^{(2)})^2}^{\frac{1}{2}} \EE{\left(\frac{1}{\mathsf M_N^2} -1 \right)^2}^{\frac{1}{2}}.
	\end{align}
	The first term on the right-hand side is uniformly bounded by \eqref{assumption F'}. For the second term note that the integrand converges to $0$ in probability and is uniformly integrable due to \eqref{assumption F'}, which implies that the second term converges to $0$. This finishes the proof by noting $\E [\mathsf M^{(2)}_N ]= \sigma^2 + o(1)$. \end{proof}

\begin{lem}\label{Lemma bound phi(0)}
	Under the assumptions of Theorem \ref{Theorem Surv prob BPRE}, if $\nu_N=o(1)$, then there exists $\theta >0$ and $p_0>0$ such that for large $N$
	\begin{align}
		\mathbb P(\mathsf F_N^-> \theta) \geq p_0.
	\end{align}
\end{lem}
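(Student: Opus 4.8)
The goal is to show that the shape function $\varphi$ of $F$, evaluated at $0$, is bounded below by a fixed constant $\theta$ with probability at least $p_0$, uniformly for large $N$. Recall from \eqref{Def. Shape function} that for a generating function $f$ with mean $m$,
\begin{align}
\varphi(0) = \frac{1}{1-f(0)} - \frac{1}{m} = \frac{f(0)/m - (1-f(0))(m-1)/m}{1-f(0)} = \frac{1 - (1-f(0))m - f(0)}{\text{\dots}},
\end{align}
but the cleaner route is to use the interpretation $\varphi(0) = \frac{1}{1-f(0)} - \frac{1}{m}$ together with $f(0) = f[0] = \P{\xi = 0 \mid F}$. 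The plan is therefore to argue that $\varphi(0)$ is bounded away from $0$ with probability bounded away from $0$, by contradiction: if $\varphi(0)$ were small, then $\frac{1}{1-f(0)}$ would be close to $\frac{1}{m}$, which (since $m = F'(1)$ is close to $1$ in a suitable averaged sense) would force $f(0)$ close to $0$, i.e.\ $\P{\xi = 0 \mid F}$ small; but a nonnegative integer random variable with mean close to $1$ and bounded fourth moment cannot have $\P{\xi = 0 \mid F}$ too small too often, essentially because of the second-moment/Paley–Zygmund mechanism.

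More concretely, first I would record that for a generating function $f$ on $\N_0$ with mean $m$ and second factorial moment $f''(1)$, one has the elementary two-sided bound relating $\varphi(0)$, $\varphi(1) = f''(1)/(2m^2)$ and $f[0]$; in particular, by Lemma~1 of \cite{Kersting2020} (already invoked in the proof of Proposition~\ref{Prop expression survival prob}) we have $\varphi(0) \geq \tfrac12 \varphi(1)$ — this would immediately reduce the claim to a lower bound on $\varphi(1) = F''(1)/(2F'(1)^2)$ holding with positive probability. So the reduced target is: there exist $\theta' > 0$, $p_0 \in (0,1)$ with $\pp{F''(1)/F'(1)^2 \geq \theta'} \geq p_0$ for large $N$. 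By Lemma~\ref{Lemma Moments of F}ii), $\EE{F''(1)/F'(1)^2} = \sigma^2(1 + o(1))$, which is bounded away from $0$ since $\sigma^2 > 0$. Combined with an upper bound on the second moment $\EE{(F''(1)/F'(1)^2)^2}$ — obtainable from $\EE{F''(1)^2} \le \EE{\xi^4} \cdot(\text{const}) = O(1)$ via \eqref{assumption xi} together with the negative-moment control $\EE{F'(1)^{-4-\delta}} = O(1)$ from \eqref{assumption F'} and Hölder — the Paley–Zygmund inequality gives $\pp{F''(1)/F'(1)^2 \geq \tfrac12 \EE{F''(1)/F'(1)^2}} \geq \frac{(\EE{\cdot})^2}{4\,\EE{(\cdot)^2}} \geq p_0 > 0$ for large $N$. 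Setting $\theta = \tfrac14\sigma^2$ (say) and using $\varphi(0) \geq \tfrac12\varphi(1)$ finishes the argument.

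The one thing to be careful about — and the most likely obstacle — is the step $\varphi(0) \geq \tfrac12 \varphi(1)$: one must check that the monotonicity/comparison statement in Lemma~1 of \cite{Kersting2020} really does bound $\varphi(s)$ from below by a constant multiple of $\varphi(1)$ on all of $[0,1]$ (the bound used earlier in the excerpt was the \emph{upper} bound $\varphi(s) \le 2\varphi(1)$); if only the upper bound is available one instead argues directly that $\varphi(0) = \frac{1}{1-f[0]} - \frac{1}{m}$ and that smallness of $\varphi(0)$ forces both $f[0]$ small and $m$ close to $1$, then notes $\frac{1}{1-f[0]} - \frac{1}{m} \ge \frac{f[0] - (m-1)}{m(1-f[0])} \ge \text{const}\cdot(f[0] - (m-1))$, and finally invokes a Paley–Zygmund-type lower bound on $f[0] = \P{\xi=0\mid F}$: since $\EE{\xi\mid F} = F'(1)$ is close to $1$ on a set of probability close to $1$ and $\EE{\xi^2 \mid F}$ has bounded annealed mean, the conditional law of $\xi$ puts mass at least some $c > 0$ on $\{0\}$ with conditional... — but this conditional-law argument is messier, so I would first try hard to extract the clean inequality $\varphi(0) \ge \tfrac12\varphi(1)$ (or $\varphi(0) \ge c\,\varphi(1)$ for some universal $c$) from \cite{Kersting2020}, which makes the proof a two-line consequence of Lemma~\ref{Lemma Moments of F}ii) plus Paley–Zygmund.
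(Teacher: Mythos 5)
Your main route contains a genuine gap: the comparison $\varphi(0)\geq \tfrac12\varphi(1)$ is not available and is in fact false for general generating functions. Lemma~1 of \cite{Kersting2020} gives $\tfrac12\varphi(0)\leq\varphi(s)\leq 2\varphi(1)$ for all $s\in[0,1]$, i.e.\ the \emph{reverse} comparison $\varphi(0)\leq 2\varphi(1)$; no universal constant $c>0$ with $\varphi(0)\geq c\,\varphi(1)$ exists. Concretely, take $f[1]=1-p$, $f[K]=p$ with $K$ large and $p\approx \sigma^2/K^2$: then $m=1+(K-1)p\to 1$, $\varphi(1)=f''(1)/(2m^2)\approx \sigma^2/2$ stays bounded below, while $\varphi(0)=1-\tfrac1m\approx \sigma^2/K\to 0$. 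So a lower bound on $\varphi(1)$ (which Paley--Zygmund does give you from Lemma~\ref{Lemma Moments of F}ii)) cannot by itself produce a lower bound on $\varphi(0)$; one must use the mass at zero. Note that in this counterexample $\EE{\xi^4}\to\infty$, which is precisely why the lemma is nevertheless true under \eqref{assumption xi}: the moment assumptions are essential and cannot be bypassed by a deterministic shape-function inequality.

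Your fallback sketch heads in the right direction but is left incomplete (it trails off before establishing the key quantitative fact). The paper's proof goes exactly this way: write $\varphi(0)=\frac{1}{1-F(0)}-\frac{1}{F'(1)}$, use $F'(1)\to 1$ in probability to reduce the claim to $\pp{F(0)\geq 2\theta}$ being bounded below, and then bound $\EE{F(0)}=\pp{\xi=0}$ away from $0$. The missing ingredient in your proposal is this last step: from $\EE{(\xi-1)^2}=\sigma^2+\eps_N^2+o(1)$, uniform integrability of $\xi^2$ (from $\EE{\xi^4}=O(1)$), and $\EE{\xi-1}=\eps_N\to 0$, one splits
\begin{align}
\sigma^2+o(1)\leq \pp{\xi=0}+(k-1)\EE{\xi-1;\xi\geq 2}+\tfrac{\sigma^2}{2}
=\pp{\xi=0}+(k-1)\bigl(\eps_N+\pp{\xi=0}\bigr)+\tfrac{\sigma^2}{2},
\end{align}
which yields $\pp{\xi=0}\geq \tfrac{\sigma^2}{2k}+o(1)$ for a suitable fixed $k$, and then $\pp{\varphi(0)\geq\theta}\geq \EE{F(0)}-2\theta+o(1)\geq p_0$ for $\theta$ small. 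Without this (or an equivalent quantitative argument forcing conditional mass at $0$ from the annealed moments), your proof does not close.
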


\begin{proof}
	From $\mathsf F^-=(1-\mathsf P[0])^{-1}-\mathsf M^{-1} \ge 1+\mathsf P[0]-\mathsf M^{-1}$ we obtain
	\begin{align}
		\mathbb P(\mathsf F^- > \theta) &\geq \mathbb P(\mathsf P[0] >  \theta +\mathsf M^{-1}-1)\\&\geq \mathbb P(\mathsf P[0] >  2\theta) -\mathbb P(1-\mathsf M^{-1}> \theta)\\
		&\geq \mathbb E[\mathsf P(0)] - 2\theta -\mathbb P(1-\mathsf M^{-1}> \theta), \label{geq p_0}
	\end{align}
	where in the last inequality we used the fact that $0 \le \mathsf P[0] \le 1$. On the other hand, for any $k>0$
	\begin{align}
		\mathbb E[(\mathsf U-1)^2;\mathsf U \le k] \le \mathbb P(\mathsf U=0) + (k-1)\mathbb E[\mathsf U-1; \mathsf U\ge 2] =  k\mathbb P(\mathsf U=0) + (k-1)\mathbb E[\mathsf M-1].
	\end{align}
	Using  $\mathbb E[\mathsf P(0)]=\mathbb P(\mathsf U=0)$ we arrive at the estimate
	\begin{align}
		\mathbb P(\mathsf F^- > \theta) \ge \frac 1k \mathbb E[(\mathsf U-1)^2;\mathsf U \le k] - \frac{k-1}k \mathbb E[\mathsf M-1]-2\theta -\mathbb P(1-\mathsf M^{-1}> \theta).
	\end{align}
	
	Turning to the sequence $(\mathsf Z_N)$ of branching processes we note that because of \eqref{assumption F'} the sequence $(\mathsf U_N^2)$ is uniformly integrable. Thus for $k$ sufficiently large we have for all $N$
	\begin{align}
		\mathbb E[(\mathsf U_N-1)^2;\mathsf U_N \le k] \ge \frac 12 \mathbb E[(\mathsf U_N-1)^2] \ge \frac 12 \text{Var}(\mathsf U_N) = \frac{\sigma^2} 2 +o(1).
	\end{align}
	Moreover, $\mathbb E[\mathsf M_N-1] =o(1)$ and $\mathbb P(1-\mathsf M_N^{-1}> \theta)=o(1)$ because of $\varepsilon_N=o(1)$ and $\nu_N=o(1)$. Thus we end up with
	\begin{align}
		\mathbb P(\mathsf F_N^- > \theta) \ge \frac{\sigma^2}{2k} -2\theta +o(1),
	\end{align}
	which implies our claim.\end{proof}
\begin{lem} \label{Lemma convergence Y}
	Under the assumptions of Theorem \ref{Theorem Surv prob BPRE},
	\begin{itemize}
		\item [i)] if $\frac{\nu_N}{\eps_N}\to 0$ as $N \to \infty$, then $\eps_N \mathsf{Y}_N$ converges in probability to $\frac{\sigma^2}{2}$,
		\item[ii)] if $\frac{\nu_N}{\eps_N}\to \rho $ as $N \to \infty$ with $0 < \rho <2$, then $\nu_N \mathsf{Y}_N$ is asymptotically inverse $\Gamma$-distributed with density $c x^{-a-2}e^{-b/x}$, where $(a,b)=\big( \frac{2(1-\rho)}{\rho}, \sigma^2  \big)$.
		\item[iii)] if $\frac{\nu_N}{\eps_N}\to 2$ as $N \to \infty$, then $\nu_N \mathsf{Y}^-_N \to \infty$ in probability, where $\mathsf{Y}^-:= \sum_{k=1}^{\infty} \frac{F_k^-}{\mu_{k-1}}$.
	\end{itemize}
\end{lem}
\begin{proof}
	We are going to apply Theorem \ref{Thm convergence perpetuity}. Note that $\nu_N=o(1)$ because of $\varepsilon_N=o(1)$. The random variables $\mathsf{Y}_N$ fulfil the distributional equation
	\begin{align}
		\mathsf{Y}_N \stackrel{d}{=} \frac{1}{\mathsf{M}_N} \mathsf{Y}_N + \mathsf{F}^+_N,
	\end{align}
	which is the annuity equation \eqref{annuity equation}
	with $\mathsf{A}_N=1/\mathsf{M}_N$ and $\mathsf{B}_N=\mathsf{F}^+_N$. We have to verify the assumptions of Theorem \ref{Thm convergence perpetuity}. Regarding the random variables $\mathsf B_N$,   Lemma~\ref{Lemma Moments of F}~iii) yields
	\begin{align}
		\EE{\mathsf{B}_N} = \frac{\sigma^2}{2}+o(1),
	\end{align}
	thus $\beta = \frac{\sigma^2}{2}$. Additionally, we have  for some $\delta>0$ 
	\begin{align}
		\EE{\mathsf B_N^{1+\delta}} \leq \EE{\left(\mathsf{M}_N^{(2)} \right)^{2+2\delta} }^{\frac{1}{2}} \EE{\mathsf{M}_N^{-4-4\delta}}^\frac 12 =O(1) ,\label{uniform integrability varphi}
	\end{align}
	by assumption    \eqref{assumption F'}. For $\alpha_N=1- \EE{ \mathsf{A}_N}$, Lemma \ref{Lemma Moments of F} i) yields
	\begin{align}
		\alpha_N = \eps_N - \nu_N + o(\eps_N),
	\end{align}
	and for $\upsilon_N = \VV{ \mathsf{A}_N}$
	\begin{align}
		\upsilon_N&= \EE{ \mathsf{M}_N^{-2}} - \EE{ \mathsf{M}_N^{-1}}^{2} = \nu_N + o(\eps_N) .
	\end{align}
	Finally, we have to confirm $\EE{|\mathsf{A}_N -1|^{2+\delta}}=o(|\alpha_N|+\upsilon_N)$. It holds for $\delta>0$ sufficiently small by  assumption \eqref{assumption F'}
	\begin{align}
		\EE{\left|\mathsf{M}_N^{-1} -1\right|^{2+\delta} } 
		\leq \EE{\mathsf{M}_N^{-4-2\delta}}^{\frac{1}{2}} \EE{|1-\mathsf{M}_N|^{4+2\delta}}^{\frac{1}{2}}  = O\big( \eps_N^{2+\delta}+\nu_N^{\frac{2+\delta}{2}} \big)=o(|\alpha_N|+\upsilon_N),
	\end{align}
	since $\nu_N=o(1)$. Thus all assumptions of Theorem \ref{Thm convergence perpetuity} are fulfilled.
	
	Regarding case i), the assumption $\nu_N/\eps_N \to 0$ implies $\alpha_N/\upsilon_N \to \infty$ and $\alpha_N/\varepsilon_N \to 1$,	hence $\eps_N \mathsf{Y}_N \to \beta = \frac{\sigma^2}{2}$ in probability by Theorem~\ref{Thm convergence perpetuity}~i).
	
	In case ii), where $\frac{\nu_N}{\eps_N} \to \rho$ with $0<\rho<2$, we have
	\begin{align}
		\frac{\alpha_N}{\upsilon_N }= \frac{\eps_N -\nu_N+ o(\eps_N)}{\nu_N} \to \gamma \text{ with } \gamma:=\frac{1-\rho}{\rho} > - \frac{1}{2}.
	\end{align}
	Thus, by an application of Theorem \ref{Thm convergence perpetuity} ii) $ \nu_N \mathsf{Y}_N$ is asymptotically inverse $\Gamma$-distributed with parameters
	\begin{align}
		(a,b)= (2( 1- \rho )/\rho,\sigma^2).
	\end{align}	
	In case iii), note that $\frac{\nu_N}{\eps_N} \to 2$ entails $\frac{\alpha_N}{\upsilon_N} \to -\frac{1}{2}$. Here we apply Theorem \ref{Thm convergence perpetuity} with $\mathsf{B}_N=\mathsf{F}_N^-$. By a subsequence argument we may assume convergence of $\EE{\mathsf B_N}$ to some constant $0 \leq \beta\leq \infty$. Then Lemma \ref{Lemma bound phi(0)} guarantees $\beta >0$, and the inequality $\mathsf{F}_N^- \leq 4 \mathsf{F}_N^+$ from \eqref{eq:bound Phi} yields $\beta < \infty$. By Theorem \ref{Thm convergence perpetuity} iii) we have $\nu_N\mathsf{Y}^-_N \to \infty$ in probability. This concludes the proof. \end{proof}

In preparation for the subsequent lemma on uniform integrability, we insert a maximal inequality.

\begin{lem}\label{lem: maximal inequality}
	Let $S_k:=\zeta_1+ \cdots+ \zeta_k$, $1\le k \le n$, with iid summands fulfilling $\mathbb E[\zeta_1]=0$ and $\mathbb E[|\zeta_1|^{r}]<\infty$ for some $r\ge 2$. Denote $\nu:= \mathbb E[\zeta_1^2]$. Then,   for any $x>0$
	\begin{align}
		\mathbb P\big(\max_{1\le k \le n} S_k >x\big) \le 2\exp\Big( - \frac{x^2}{cn\nu} \Big)+ c n^{1-r/2} \frac{\mathbb E[|\zeta_1|^{r}]}{\nu^{r/2}},
	\end{align}
	with some $c>0$  depending only on $r$.
\end{lem}
\begin{proof} Without loss we may assume $\nu=1$. 
	We use a bound by Fuk and Nagaev as presented in \cite[Corollary 1.8]{Nagaev1979} and saying that for some $c>0$ depending only on $r$ we have
	\begin{align}
		\label{Nagaev estimate}
		\mathbb P(S_n > x) \le \exp\Big(- \frac {x^2}{cn} \Big) + c nx^{-r} \mathbb E[|\zeta_1|^{r}]
	\end{align}
	for all $x>0$. It is known that similar estimates hold as well for $T_n:=\max _{1\le k \le n}S_k$ instead of $S_n$ (see e.g. \cite{Borovkov1972}). For our purpose the following short argument is sufficient. Note that  Chebychev's inequality implies $\mathbb P(S_k>x) \le 1/2$ for $k\le n$ and $x\ge \sqrt {2n}$. Thus for $x\ge 2\sqrt {2n}$ we have
	\begin{align*}
		\mathbb P(T_n >x, S_n \le x/2) &\le \sum_{k=1}^n \mathbb P(T_{k-1}\le x, S_k>x, S_n-S_k>x/2)\\
		&\le \frac 12 \sum_{k=1}^n \mathbb P(T_{k-1}\le x, S_k>x) = \frac 12 \mathbb P(T_n >x),
	\end{align*}
	consequently $\mathbb P(T_n>x)\le \mathbb P(T_n>x)/2 + \mathbb P(S_n>x/2)$ or 
	\begin{align*}
		\mathbb P(T_n >x) \le 2 \mathbb P(S_n > x/2).
	\end{align*}
	Combining this bound with \eqref{Nagaev estimate} yields the lemma's claim for any $x\ge 2\sqrt{2n}$ by suitably adjusting the constant $c>0$. On the other hand, for $0<x\le 2\sqrt{2n}$ we have 
	\begin{align*}2\exp\Big(- \frac {x^2}{cn} \Big) \ge 2\exp\Big(-\frac 8c\Big)\ge 1
	\end{align*}
	for sufficiently large $c$, and our claim is trivially fulfilled.\end{proof}

%

\begin{lem} \label{Lemma uniform integrability X}
	Under the assumptions of Theorem \ref{Theorem Surv prob BPRE}, if $\nu_N=O(\varepsilon_N)$ as $N\to \infty$, then the sequence $(\frac{1}{\eps_N \mathsf X_N}, N \geq 1)$ is uniformly integrable.
\end{lem}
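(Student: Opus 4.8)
The plan is to compare $X$ with the simpler perpetuity $Y = \sum_k \varphi_{k+1}(1)/\mu_k$, whose rescaled versions converge by Lemma~\ref{Lemma convergence Y}, and to control the difference through the $\mu_k$'s. Recall from the proof of Proposition~\ref{Prop expression survival prob} (using Lemma~1 of \cite{Kersting2020}) that $0 \le \varphi_{k+1}(\cdot) \le 2\varphi_{k+1}(1)$, hence $0 \le X \le 2Y$ almost surely, so $\frac{1}{\eps_N X} \ge \frac{1}{2\eps_N Y}$; but this is the wrong direction. Instead I would bound $\frac{1}{\eps_N X}$ from above on a high-probability event by exhibiting a single summand that is not too small. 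The key observation is that the $k=0$ term of $X$ is $\varphi_1(\pp{Z_\infty = 0 \mid Z_1 = 1, V})/\mu_0 = \varphi_1(\pp{Z_\infty=0\mid Z_1=1,V})$, and more generally each term is governed by $\varphi_{k+1}$ evaluated at a point in $[0,1]$, which by monotonicity of $\varphi$ dominates $\varphi_{k+1}(0) = \varphi_{k+1}(\pp{Z_\infty=0\mid Z_1 = 1, V}$ is at least $\varphi_{k+1}(0))$ since the argument lies in $[0,1]$ and $\varphi$ is nondecreasing. So $X \ge \sum_{k=0}^{\infty} \varphi_{k+1}(0)/\mu_k$.

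Next I would use Lemma~\ref{Lemma bound phi(0)}: there exist $\theta > 0$, $p_0 \in (0,1)$ with $\pp{\varphi(0) \ge \theta} \ge p_0$ for large $N$. Since the pairs $(\varphi_{k+1}(0), F_{k+1}'(1))$ are i.i.d.\ across $k$, one expects infinitely many indices $k$ with $\varphi_{k+1}(0) \ge \theta$ and simultaneously $\mu_k$ not too large. Concretely, I would first show that $\eps_N X$ is \emph{bounded below in probability}, uniformly in $N$: for any $\eta > 0$ there is $c = c(\eta) > 0$ with $\pp{\eps_N X < c} \le \eta$ for all large $N$. To get this, combine (a) the lower bound $\eps_N X \ge \eps_N \sum_k \varphi_{k+1}(0)/\mu_k$; (b) a lower tail estimate on $\mu_k$ of the form $\mu_k \le e^{C \eps_N k}$ for $k \le$ (const)$/\eps_N$ with probability close to $1$ — this follows from $\E[\log F'(1)] = \eps_N - \tfrac{\nu_N}{2} + O(\eps_N) \le C\eps_N$ (by Lemma~\ref{Lemma Moments of F} and a Taylor expansion of $\log$) together with a maximal/martingale inequality of the type used in Lemma~\ref{Lemma tightness}; (c) among the $\sim 1/\eps_N$ indices $k$ with $\mu_k$ controlled, a binomial count shows at least order $1/\eps_N$ of them have $\varphi_{k+1}(0) \ge \theta$ with high probability. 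Multiplying, $\eps_N \sum \ge \eps_N \cdot (\text{const}/\eps_N) \cdot \theta \cdot e^{-C}$, a constant bounded away from $0$.

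Once $\eps_N X$ is bounded below in probability uniformly in $N$, I would upgrade to uniform integrability of $\frac{1}{\eps_N X}$ by controlling the contribution of the event $\{\eps_N X \text{ small}\}$. Here I would use the exact representation $\frac{1}{\P{Z_\infty > 0 \mid V}} = \mu_\infty^{-1} + X$ from Proposition~\ref{Prop expression survival prob}, which gives $\frac{1}{\eps_N X} \le \frac{1}{\eps_N}\cdot\frac{1}{\P{Z_\infty>0\mid V}} = \frac{1}{\eps_N \,\P{Z_\infty > 0\mid V}}$... actually more usefully, since $X \le \frac{1}{\P{Z_\infty>0\mid V}}$ is false in general — rather $X = \frac{1}{\P{Z_\infty>0\mid V}} - \mu_\infty^{-1} \le \frac{1}{\P{Z_\infty>0\mid V}}$, so $\frac{1}{\eps_N X} \ge \eps_N^{-1}\P{Z_\infty>0\mid V}$, again the wrong way. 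So for the uniform-integrability upgrade I would instead truncate: write $\E\!\left[\tfrac{1}{\eps_N X}; \tfrac{1}{\eps_N X} > M\right] = \E\!\left[\tfrac{1}{\eps_N X}; \eps_N X < 1/M\right]$ and bound this by a Cauchy--Schwarz / Hölder split, pairing the lower-tail probability $\pp{\eps_N X < 1/M}$ (which is $\le \eta(M) \to 0$ as $M \to \infty$, uniformly in $N$, by the previous paragraph) against an $L^q$ bound on $\frac{1}{\eps_N X}$ for some $q > 1$. The $L^q$ bound comes from iterating the lower-bound argument: refine step (c) to a large-deviation count showing $\pp{\eps_N X < c/j} \le K \lambda^{j}$ for some $\lambda < 1$, i.e.\ exponential decay of the lower tail, which immediately yields all moments of $\frac{1}{\eps_N X}$ bounded uniformly in $N$, and UI follows.

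**Main obstacle.** The delicate point is step (c) together with its exponential refinement: one needs the joint control of "$\mu_k$ stays below $e^{C\eps_N k}$ for all $k$ up to order $1/\eps_N$" \emph{and} "a positive density of those $k$ have $\varphi_{k+1}(0) \ge \theta$" to hold simultaneously with exponentially small failure probability, uniformly in $N$ as $\eps_N \to 0$. The subtlety is that as $\eps_N \to 0$ the relevant window of indices grows like $1/\eps_N$ while the drift of $\log\mu_k$ shrinks like $\eps_N$, so the random fluctuations of $\log \mu_k$ (of order $\sqrt{k\,\gamma_N} \sim \sqrt{\nu_N/\eps_N}$, which may not be small in case ii)) are on the same scale as the drift — precisely the transition regime. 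Handling this requires the same exponential-martingale bound $\E[F'(1)^{-u}] \le e^{-u\theta(\beta_N+\gamma_N)}$ as in Lemma~\ref{Lemma tightness} to tame the upper excursions of $\mu_k$, and care that the constants $c, \lambda, q$ do not degenerate as $\rho$ approaches the endpoints $0$ or $2$.
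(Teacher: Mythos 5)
Your first step contains a fixable slip: the shape function is \emph{not} monotone in general, so you cannot argue $\varphi_{k+1}(Q_{k+1})\ge \varphi_{k+1}(0)$ by monotonicity. For instance, $f(s)=\tfrac12+\tfrac12 s^2$ gives $\varphi(s)=\tfrac{1}{1+s}$, which is strictly decreasing. What saves the inequality is Lemma~1 of \cite{Kersting2020}, which yields $\varphi(s)\ge\tfrac12\varphi(0)$ for all $s\in[0,1]$; this is exactly what the paper uses, and with the harmless factor $\tfrac12$ your lower bound $X\gtrsim\sum_k\varphi_{k+1}(0)/\mu_k$, combined with Lemma~\ref{Lemma bound phi(0)}, a binomial count, and a maximal inequality for $\log\mu_k$, does give that $\eps_N X$ is bounded below in probability. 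That part parallels the first half of the paper's argument.

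The genuine gap is the upgrade from this in-probability bound to uniform integrability. You hang it on the claim $\pp{\eps_N X<c/j}\le K\lambda^j$ with $\lambda<1$ uniformly in $N$, and hence uniform $L^q$ bounds for some $q>1$. Nothing in your toolkit can produce exponential decay in $j$: up to an event of probability $o(\eps_N)$ (the Hoeffding count), the event $\{\eps_N X\le 2^{-\kappa}\}$ forces $\max_{k\le\lfloor 1/\eps_N\rfloor}\mu_k\gtrsim 2^{\kappa}$, i.e.\ an upward excursion of the centred walk $\sqrt{\nu_N}S_k$ of height $\kappa\log 2$, and under the moment assumptions \eqref{assumption F'} what one can prove is Gaussian-type decay in $\kappa$ — this is precisely the paper's estimate \eqref{Maximum of random walk}, $\exp(-c\,\eps_N(\log 2^{\kappa})^2/\nu_N)+o(\eps_N)$, obtained via Skorokhod embedding (reflection principle for the Brownian part, Rosenthal's inequality for the time change) — i.e.\ lognormal, not exponential, in $1/x$. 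The exponential-martingale bound you invoke from Lemma~\ref{Lemma tightness} controls $\EE{F'(1)^{-u}}$ and thereby prevents the perpetuity from being \emph{large}; it says nothing about the probability that $\mu_k$ becomes \emph{large}, which is the mechanism making $X$ small. Moreover, a bound $K\lambda^j$ uniform over the whole relevant range $j\lesssim 1/\eps_N$ is unattainable by these methods for a second reason: the additive failure probabilities entering the argument (Hoeffding count, time-change control) are $o(\eps_N)$ terms that do not decay in $j$. The paper copes with exactly this by exploiting $X\ge 1$, so the dyadic sum in $\EE{\tfrac{1}{\eps_N X};\tfrac{1}{\eps_N X}\ge 2^{\kappa_0}}$ runs only over $\kappa\le\log_2(1/\eps_N)$ scales and the $2^{\kappa}\,o(\eps_N)$ errors total $o(1)$; your sketch has no counterpart of this truncation. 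For the same reason uniform $L^q$ bounds with $q>1$ do not follow from the available estimates — and they are not needed: all that is required is lower-tail decay beating the weight $2^{\kappa}$, e.g.\ the Gaussian-in-$\kappa$ bound above, or any bound on $\pp{\max_{k\le\lfloor1/\eps_N\rfloor}\mu_k\ge t}$ of order $t^{-u}$ with $u>1$, inserted into the dyadic decomposition together with the $X\ge1$ truncation. As written, the central step of your proposal is unsupported.
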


\begin{proof}
	We allow that $\mathsf X_N$ takes the value $\infty$, then the event $1/(\varepsilon_N\mathsf X_N)=0$ will occur. Essentially, this proof is concerned with bounding  the distribution function of $\mathsf X$. 
	
	Set $\mathbb E[\mathsf M]=1+\varepsilon$ with $\varepsilon >0$ and let $n$  be the natural number satisfying $(n-1)\varepsilon < 1 \le n\varepsilon$. Using \eqref{eq:bound Phi} we have for  $x>0$, $y\ge 1$ 
	\begin{align}
		\mathbb P(\mathsf X \le x) \le \mathbb P \Big(\frac 12 \sum_{k=1}^n \frac{F_k^-}{\mu_{k-1}} \le x\Big)
		\le \mathbb P \Big(\frac 12 \sum_{k=1}^n F_k^- \le xy\Big)+ \mathbb P\big( \max_{1\le k < n} \mu_k >y\big).
	\end{align}
	Denoting $M_k -\mathbb E[M_k]=\zeta_k$ we obtain
	\begin{align}
		\mu_k = \prod_{i=1}^k (1+\varepsilon + \zeta_i) \le \prod_{i=1}^k \exp(\varepsilon + \zeta_k) = \exp \Big(k\varepsilon + \sum_{i=1}^k \zeta_i\Big),
	\end{align}
	hence
	\begin{align}
		\max_{1\le k <n} \mu_k \le \exp\Big( 1 + \max_{1\le k <n} \sum_{i=1}^k \zeta_i \Big), 
	\end{align}
	and with $S_k= \zeta_1+ \cdots + \zeta_k$
	\begin{align}
		\mathbb P(\mathsf X \le x)  \le \mathbb P \Big( \sum_{k=1}^n F_k^- \le 2xy\Big)+ \mathbb P\Big( \max_{1 \le k < n} S_k > \log y - 1\Big).
	\end{align}
	Applying Lemma~\ref{lem: maximal inequality} yields for $y>e$ and $r \geq 2$
	\begin{align}
		\mathbb P(\mathsf X \le x)  \le \mathbb P \Big( \sum_{k=1}^n F_k^- \le 2xy\Big)+ 2\exp\Big( - \frac {(\log y-1)^2}{c(n-1)\nu}\Big) + c (n-1)^{1-r/2} \frac{\mathbb E[|\zeta_1|^{r}]}{\nu^{r/2}}
	\end{align}
	with $\nu = \mathbb E[\zeta_1^2]$. Putting $x= e^{-m}/\varepsilon$, $y=q e^m$ with $q>0$ and taking $ (n-1) < 1/\varepsilon\le n $ into account we arrive for $m>1-\log q$ at
	\begin{align}
		\mathbb P\Big(\mathsf X \le \frac{e^{-m}}{\varepsilon}\Big) \le \mathbb P \Big( \sum_{k=1}^n F_k^- \le  2q n \Big) + 2\exp\Big( - \frac {(m+\log q-1)^2\varepsilon }{c\nu}\Big)+ c \frac{\mathbb E[|\mathsf M- \mathbb E[\mathsf M]|^{r}]}{ (n-1)^{r/2-1}\nu^{r/2}}.
	\end{align}
	Now we exploit this formula for the sequence $(\mathsf X_N)$. Note that $\nu_N^{-r/2}\mathbb E[|\mathsf M_N- \mathbb E[\mathsf M_N]|^{r}]=O(1)$ for some $r>4$ by \eqref{assumption F'}. Thus, the rightmost term is of order $O(n^{1-r/2})=O(\varepsilon_N^{r/2-1})$.  Also, by Lemma \ref{Lemma bound phi(0)} the term $\sum_{k=1}^n F_k^-$ may be stochastically bounded from below by $\theta \text{Bi}_n$ with $\theta>0$ and a binomial random variable  $\text{Bi}_n$  with parameters $n$ and $p_0$. Therefore, for  $q>0$ sufficiently small the probability $\mathbb P \Big( \sum_{k=1}^n F_k^- \le  2q n \Big)$ decreases exponentially fast in $n$, respectively in $\varepsilon_N^{-1}$. Finally, by assumption there is a $c>0$ such that $\varepsilon_N \ge 4c \nu_N$ for all $N$. With these ingredients our estimate  shrinks to
	\begin{align}
		\mathbb P\Big(\mathsf X_N \le \frac{e^{-m}}{\varepsilon_N}\Big) \le c' \varepsilon_N^{r/2-1}+\exp\big(-c'(m+\log q-1)^2\big), \label{eq:bound X_N}
	\end{align}
	for $m>1-\log q$ and for $N$ large enough, with some $c'>0$.
	
	We are  ready to prove the lemma's claim. Note that $\mathsf X_N\ge 1$ a.s. by \eqref{prob and expec combined}. Thus, we have 
	\begin{align}
		\mathbb E\Big[\frac{1}{\eps_N \mathsf X_N} ; \frac{1}{\eps_N \mathsf X_N} \geq e^{m_0}\Big] 
		\leq  \sum_{m=m_0}^{\lfloor \log 1/\eps_N   \rfloor }   e^{m+1}\mathbb P\Big(\frac{e^{-(m+1)}}{\eps_N }\le\mathsf X_N\leq \frac{e^{-m}}{\eps_N }\Big) 
	\end{align}	
	From \eqref{eq:bound X_N} we obtain for $m_0>1-\log q$
	\begin{align}
		\mathbb E\Big[\frac{1}{\eps_N \mathsf X_N} ; \frac{1}{\eps_N \mathsf X_N} \geq e^{m_0}\Big]  
		&\le  \sum_{m=m_0}^{\lfloor \log 1/\eps_N   \rfloor }   e^{m+1} \Big( c' \varepsilon_N^{r/2-1}+ 2e^{-c'(m+\log q-1)^2} \Big)\\
		& \le c'e^2 \varepsilon_N^{r/2-2}+2\sum_{m=m_0}^{\infty }   e^{m+1-c'(m+\log q-1)^2}.
	\end{align}	
	Since the series is convergent and $r>4$, the right-hand expression can be made arbitrarily small with increasing $N$ and $m_0$, which confirms our claim. \end{proof}

\begin{prop} \label{Lemma convergence in prob}
	Under the assumptions of Theorem \ref{Theorem Surv prob BPRE} and  $\nu_N\to 0$ we have as $N \to \infty$,
	\begin{itemize}
		\item[i)] if $\frac{\nu_N}{\varepsilon_N} \to 0$, then $\frac{\pi(\mathsf V_N)}{\varepsilon_N} \to \frac 2{\sigma^2}$ in probability,
		\item[ii)] if $\frac{\nu_N}{\varepsilon_N} \to \rho$ with $0<\rho < 2$, then $\frac{\pi(\mathsf V_N)}{\varepsilon_N}$ is asymptotically $\Gamma$-distributed  with density $c' x^{a'-1}e^{-b'x}$, where $a'= \frac{2-\rho}\rho$, $b'=\frac{\sigma^2}\rho$ and $c'=(b')^{a'}/\Gamma(a')$,
		\item[iii)] if $\frac{\nu_N}{\varepsilon_N} \to 2$, then $\frac{\pi(\mathsf V_N)}{\varepsilon_N}\to 0$  in probability.
	\end{itemize}
\end{prop}

\begin{proof} We begin with some considerations concerning the cases i) and ii). We are going to apply Proposition \ref{Prop expression survival prob}. Note that its assumption $\EE{\log^+ \mathsf{M}_N^{(2)}}<\infty$ is satisfied because of \eqref{assumption MN}. Moreover, $\EE{\log \mathsf{M}_N}>0$ for large $N$ because of Lemma \ref{Lemma Moments of F} ii) and $0 \leq \rho <2$. Also, taking Lemma \ref{Lemma convergence Y} into account it is sufficient to show that $\eps_N |\mathsf Y_N-\mathsf X_N|$ converges to $0$ in probability. Using \eqref{formula X}, \eqref{formula Y} and Lemma \ref{complex approx} we have 
	\begin{align}
		|\mathsf Y-\mathsf X|\leq  \sum_{k=1}^{\infty} \frac{B_k}{\mu_{k-1}},
	\end{align}
	here now with
	\begin{align}
		B_k:=\eta (( M_k^{(2)})^2 +  M_k^{(2)} + \mathbb E[  U_{1,k}^4 \mid \mathsf V]) + 2 F_k^+ \1_{\{\pi_{k,\infty}(\mathsf V)> (\eta  M_k)^3 \text{ or } \eta  M_k >1\}}
	\end{align}	 
	and any $0<\eta \le 1/2$. Thus, we are once more in the setting of Lemma \ref{Lemma tightness} with $ A_k:= 1/ M_k$, $C_k=\mu_k^{-1}$ and
	\begin{align}
		\mathbb E[ \mathsf A^u] &= \mathbb E [\mathsf M^{-u}],  \\
		\mathbb E[\mathsf B]&= \eta(\mathbb E[( \mathsf M^{(2)})^2] +  \mathbb E[\mathsf M^{(2)} ]+ \mathbb E[  \mathsf U^4 ])+ 2\mathbb E[\mathsf  F^+ ;\pi(\mathsf V)> (\eta \mathsf  M)^3 \text{ or } \eta  \mathsf M>1]. \label{eq:expression E[B]}
	\end{align}
	Note that the sequence $A_k, k \geq 1$ is iid and that the $B_k$ have the same finite expectation. 
	Now the conclusion of Lemma \ref{Lemma tightness} reads
	\begin{align}
		\mathbb P\big( (1- \mathbb E[\mathsf A^u]^{\frac 1u})| \mathsf X-\mathsf Y| > c \mathbb E[\mathsf B]\big) \le 2 c^{-\frac u{1+u}} \label{eq:inequality perpetuity}
	\end{align}
	for any $c>0$, provided that $\EE{\mathsf{A}^u }^{\frac{1}{u}}<1$.
	
	In order to evaluate \eqref{eq:inequality perpetuity} we first show that $\EE{\mathsf{B}_N}\to 0$. We have 
	\begin{align}
		\pp{\pi(\mathsf{V}_N) > (\eta \mathsf{M}_N)^3 \text{ or }  \eta \mathsf{M}_N >1 } \to 0,
	\end{align}
	since $\mathsf{M}_N \to 1$ and $\pi(\mathsf{V}_N)\to 0$ in probability, where we use $\pi(\mathsf{V}_N)= \mathsf{X}_N^{-1}$ by Proposition \ref{Prop expression survival prob} and the fact that $\mathsf{X}_N \to \infty$ in probability by Lemma \ref{Lemma uniform integrability X}. Since $\mathsf{F}_N^+$ is uniformly integrable by \eqref{uniform integrability varphi}, the right most term in \eqref{eq:expression E[B]} vanishes as $N\to \infty$.
	Further, by assumption \eqref{assumption F'} we have
	\begin{align}
		\mathbb E[( \mathsf M_N^{(2)})^2] +  \mathbb E[\mathsf M_N^{(2)} ]+ \mathbb E[  \mathsf U_N^4 ] \leq  C,
	\end{align}
	for some constant $C<\infty$ uniformly in  $N$. Altogether, from \eqref{eq:expression E[B]} we obtain $\EE{\mathsf{B_N}} \to 0$, since $\eta$ can be chosen arbitrarily small. Now we are ready to treat the cases i) to iii).
	
	i) From $\nu_N = o(\eps_N)$ by Lemma \ref{Lemma Moments of F} i) we get
	\begin{align}
		1- \EE{\mathsf{A}_N^u}^{\frac{1}{u}} = 1- (1- u \eps_N +  o(\eps_N))^{\frac{1}{u}} = \eps_N + o(\eps_N).
	\end{align}
	Therefore, making use of \eqref{eq:inequality perpetuity} and $\EE{\mathsf B_N}\to 0$ we get $\eps_N|\mathsf{Y}_N-\mathsf{X}_N|\to 0$ in probability as $N \to \infty$. By Lemma \ref{Lemma convergence Y}~i), $\eps_N \mathsf{X}_N \to \frac{\sigma^2}{2}$ in probability, hence our claim follows by Proposition \ref{Prop expression survival prob}. 
	
	ii) Lemma \ref{Lemma Moments of F} i) yields
	\begin{align}
		1-\EE{\mathsf{A}_N^u}^{\frac{1}{u}} = 1- \left(1-u \eps_N + \frac{u(u+1)}{2} \nu_N + o(\eps_N + \nu_N)\right)^{\frac{1}{u}} =\eps_N - \frac{u+1}{2} \nu_N +o(\eps_N + \nu_N). 
	\end{align}
	Here $\frac{\nu_N}{\eps_N} \to \rho \in (0,2)$, hence choosing $u$ small enough such that $\eps_N -\frac{(u+1) \nu_N}{2} \geq  u \nu_N$ for large $N$ and again by \eqref{eq:inequality perpetuity} and $\EE{\mathsf B_N}\to 0$ we get that $\nu_N |\mathsf{Y}_N-\mathsf{X}_N | \to 0$ in probability.
	Consequently, $\nu_N \mathsf{X}_N$ is asymptotically inverse $\Gamma$-distributed by Lemma \ref{Lemma convergence Y}~ii). Hence $\frac{1}{\eps_N \mathsf{X}_N}$ is asymptotically $\Gamma$-distributed with the stated density. Therefore, the claim follows by Proposition \ref{Prop expression survival prob}.
	
	iii) In this case equation \eqref{eq:inequality perpetuity} is no longer applicable. Here we distinguish two cases. By a subsequence argument we may assume that either $\EE{ \log \mathsf{M}_N}\leq 0$ for all $N$, or $\EE{ \log \mathsf{M}_N}>0$ for all $N$. In the first case $\pi_N=0$ by criticality or subcriticality. In the second case we may again apply Proposition \ref{Prop expression survival prob}. Then we have
	\begin{align}
		\frac{\pi(\mathsf{V}_N)}{\eps_N} = \frac{1}{\eps_N \mathsf{X}_N} \leq \frac{1}{2 \eps_N \mathsf{Y}^-_N} \to 0,
	\end{align}
	in probability by \eqref{eq:bound Phi} and by an application of Lemma \ref{Lemma convergence Y}~iii).\end{proof}

\begin{proof}[Proof of Theorem \ref{Theorem Surv prob BPRE}]
	i) Since $\pi(\mathsf{V}_N )/ \eps_N$ is uniformly integrable by Lemma \ref{Lemma uniform integrability X}, it follows
	\begin{align}
		\frac{\pi_N}{\eps_N} = \EE{\frac{\pi(\mathsf{V}_N)}{\eps_N}} \to \frac{2}{\sigma^2},
	\end{align}
	by Proposition \ref{Lemma convergence in prob} i).
	
	ii) Now we have $\nu_N/\eps_N \to \rho \in (0,2)$. By the same line of arguments we obtain
	\begin{align}
		\frac{\pi_N}{\eps_N }=	\EE{\frac{\pi(\mathsf{V}_N)}{\eps_N} } \to \frac{(b')^{a'}}{\Gamma(a')} \int_{0}^{\infty} x x^{a'-1} e^{-b'x} dx = \frac{a'}{b'} =  \frac{2 -\rho }{\sigma^2}.
	\end{align}
	
	iii) This claim follows in the same vein using Lemma \ref{Lemma uniform integrability X} and Proposition \ref{Lemma convergence in prob} iii).
	
	iv) Again, in view of a subsequence argument, we may assume without loss of generality that the limits $\nu_\infty= \lim_N \nu_N$ and $l_\infty = \lim_N \mathbb E[\log \mathsf M_N]$ exist. Because of $\varepsilon_N \to 0$ it follows that
	\begin{align} \mathbb E[f(\mathsf M_N)] \to l_\infty,
	\end{align}
	with $f(x):=\log x- x+1$, $x>0$. Since $f(x)\le 0$ for all $x$, we have $l_\infty\le 0$
	(possibly with value $-\infty$). Note that $f$ is  concave with a single zero at point 1. Thus the condition $l_\infty=0$ entails that $\mathsf M_N$ converges to 1 and $\mathsf M_N- \mathbb E[\mathsf M_N]$ to 0 in probability, as $N\to \infty$. 
	
	Now, if $\nu_\infty=0$, then we may resort to Lemma \ref{Lemma Moments of F} ii), yielding under the present assumption  $\mathbb E[\log \mathsf M_N]<0$ for large $N$, thus subcriticality.
	
	On the other hand, if $\nu_\infty >0$, then  we have uniform integrability of $(\mathsf M_N-\mathbb E[\mathsf M_N])^2$ because of \eqref{assumption F'}, hence
	\begin{align} \liminf_N \mathbb E[ \min(\eta,(\mathsf M_N-\mathbb E[\mathsf M_N])^2) ] \ge \frac {\nu_\infty}{2}>0,
	\end{align}
	for some $\eta >0$. Consequently, $\mathsf M_N- \mathbb E[\mathsf M_N]$ does not converge to 0 in probability. As just shown, this implies $l_\infty<0$, which again implies the claimed subcriticality.\end{proof}

\noindent \textbf{Acknowledgements.}
We like to thank the anonymous referees for their careful reading and helpful suggestions. In particular we are grateful for pointing us to a serious gap in a previous version of the paper.


\bibliographystyle{plain} 
\bibliography{mybib_bernoulli.bib}       



\end{document}